\documentclass[12pt]{amsart}
\usepackage{amsfonts, amssymb, amsthm, amsmath, enumerate, hyperref, mathbbol, mathtools}
\usepackage{multirow, hyperref}
\oddsidemargin = -0in \evensidemargin = 0in \textwidth =6.5in
\textheight=9.5in \topmargin=-.5in
\newtheorem{thm}{Theorem}[section]

\newtheorem{cor}[thm]{Corollary}
\newtheorem{lemma}[thm]{Lemma}
\newtheorem{prop}[thm]{Proposition}

\DeclareMathOperator{\Res}{Res }

\DeclarePairedDelimiter\floor{\lfloor}{\rfloor}
\newcommand{\miss}[1][S]{\Upsilon_{#1}}
\newcommand{\misss}[1][S]{\Upsilon_{#1}^\sharp}
\newcommand{\cms}[1][S]{\kappa_{#1}}
\newcommand{\cmss}[1][S]{\kappa_{#1}^\sharp}
\newcommand{\CC}{\mathbb{C}}

\newcommand{\HH}{\mathcal{H}}
\newcommand{\NN}{\mathbb{N}}
\newcommand{\ZZ}{\mathbb{Z}}

\newcommand{\SL}{{\text {\rm SL}}}

\newcommand{\fp}[3]{f^{(#1)}_{#2,#3}}
\newcommand{\gp}[3]{g^{(#1)}_{#2,#3}}
\newcommand{\la}[1][p]{\Lambda_{#1}}
\newcommand{\abs}[1]{\left|#1\right|}

\begin{document}
\title{Zagier duality for level $p$ weakly holomorphic modular forms}
\author{Paul Jenkins}\email{jenkins@math.byu.edu}
\author{Grant Molnar}\email{gmolnar@mathematics.byu.edu}\thanks{
	Brigham Young University, Provo, UT. This work was partially supported by a grant from the Simons Foundation, ($\# 281876$ to Paul Jenkins).}

\begin{abstract}
	We prove Zagier duality between the Fourier coefficients of canonical bases for spaces of weakly holomorphic modular forms of prime level $p$ with $11 \leq p \leq 37$ with poles only at the cusp at $\infty$, and special cases of duality for an infinite class of prime levels. We derive generating functions for the bases for genus 1 levels.
\end{abstract}

\maketitle

\section{Introduction}

In 2002, Zagier \cite{Zagier} proved that the Fourier coefficients of two sequences of half-integral weight modular forms exhibit a curious duality: the $m^{th}$ coefficient of the $n^{th}$ form in one sequence is the negative of the $n^{th}$ coefficient of the $m^{th}$ form in the other sequence. To prove this, Zagier used a bivariate generating function for the two sequences of forms. Bringmann and Ono extended Zagier's results by proving duality theorems for harmonic Maass forms and Poincar\'{e} series of level 4 and half-integral weight \cite{Bringmann}. Likewise, Rouse \cite{Rouse}, Choi \cite{Choi}, and Zhang \cite{Zhang} showed that duality holds for certain Hilbert modular forms and forms with quadratic character.

In 2007, Duke and the first author discovered Zagier duality in integral weight weakly holomorphic modular forms \cite{Jenkins1}, again using a generating function. Let $q = e^{2 \pi i z}$, and denote by $M_k^!(N)$ the space of weakly holmorphic modular forms of level $N$. Let $\{f_{k, m} = q^{-m} + \sum\limits_n a_k(m,n) q^n \}_m$ be the reduced row echelon basis for $M_k^!(1)$. \begin{thm}[\cite{Jenkins1}, Theorem 2]\label{JenkinsDuke}
	For any even integer $k$ we have the generating function \[\sum\limits_{m \geq - \ell} f_{k,m}(\tau)q^m = \frac{f_k(\tau) f_{2 - k}(z)}{j(z) - j(\tau)}\] where $f_k = \Delta^\ell E_{k'}$ with $k = 12 \ell + k'$ and $k' \in \{0, 4, 6, 8, 10, 14\}$.
\end{thm}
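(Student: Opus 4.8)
The plan is to write the right-hand side as $R(\tau,z)=f_k(\tau)f_{2-k}(z)/(j(z)-j(\tau))$, expand it as a $q$-series $R(\tau,z)=\sum_{m\ge-\ell}g_m(\tau)q^m$ in $z$, and prove $g_m=f_{k,m}$ for every $m\ge-\ell$ by induction on $m$; I write $[h]_{q^m}$ for the coefficient of $q^m$ in a $q$-expansion $h$. Throughout I use the structure of the echelon basis from \cite{Jenkins1}: every element of $M_k^!(1)$ is $f_k$ times a polynomial in $j$, so a weight-$k$ weakly holomorphic form whose $q$-expansion is $O(q^{\ell+1})$ vanishes, hence a form in $M_k^!(1)$ is determined by (and recoverable from) the part $\sum_{n\le\ell}[h]_{q^n}q^n$ of its expansion; in particular $f_{k,-\ell}=\Delta^\ell E_{k'}=f_k$. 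In the theorem's notation $f_{2-k}=\Delta^{-\ell-1}E_{14-k'}$, which is holomorphic on $\HH$ and is the first basis element $f_{2-k,\ell+1}$ in weight $2-k$. The base case $m=-\ell$ is the elementary computation, using $f_{2-k}(z)=q^{-\ell-1}(1+O(q))$ and $1/(j(z)-j(\tau))=q+O(q^2)$, that $R(\tau,z)=f_k(\tau)q^{-\ell}+O(q^{-\ell+1})$, so $g_{-\ell}=f_k=f_{k,-\ell}$.

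The inductive step comes from two companion recursions. Writing $j=\sum_{i\ge-1}\iota_iq^i$ with $\iota_{-1}=1$ and $\iota_0=744$, the identity $(j(\tau)-j(z))R(\tau,z)=-f_k(\tau)f_{2-k}(z)$ --- immediate from the definition of $R$ --- yields, on extracting $[\,\cdot\,]_{q^m}$ and using that $f_{2-k}=f_{2-k,\ell+1}$ has $q^m$-coefficient $a_{2-k}(\ell+1,m)$,
\[
g_{m+1}=(j(\tau)-744)\,g_m-\sum_{i=1}^{m+\ell}\iota_i\,g_{m-i}+a_{2-k}(\ell+1,m)\,f_k(\tau).
\]
On the other hand $j\cdot f_{k,m}$ lies in $M_k^!(1)$ with pole order $m+1$; reading off its low-order part and re-expanding in the echelon basis gives
\[
f_{k,m+1}=(j-744)\,f_{k,m}-\sum_{i=1}^{m+\ell}\iota_i\,f_{k,m-i}-a_k(m,\ell+1)\,f_k.
\]
Granting the inductive hypothesis $g_i=f_{k,i}$ for $-\ell\le i\le m$, substituting into the first recursion and comparing with the second shows $g_{m+1}=f_{k,m+1}$ exactly when $a_{2-k}(\ell+1,m)=-a_k(m,\ell+1)$. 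Thus the whole point is to show that the Fourier coefficients of $R$ in $z$ inherit the echelon (gap) structure, and this reduces to a single instance of Zagier duality for level $1$.

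That duality I would prove directly. Since $f_kf_{2-k}=\Delta^{-1}E_{k'}E_{14-k'}=E_{14}/\Delta$ --- the identity $E_{k'}E_{14-k'}=E_{14}$ holding because $\dim M_{14}(1)=1$ --- and $E_{14}/\Delta$ is holomorphic on $\HH$, the product $f_{k,m}f_{2-k,n}$ is, for all admissible $m\ge-\ell$, $n\ge\ell+1$, a weakly holomorphic modular form of weight $2$ that is holomorphic on $\HH$ with a single pole at the cusp. Then $f_{k,m}(\tau)f_{2-k,n}(\tau)\,d\tau$ is an $\SL_2(\ZZ)$-invariant meromorphic $1$-form, so it descends to $X(1)\cong\PP^1$ with its only pole at $\infty$; by the residue theorem its residue there --- equivalently, the constant term of $f_{k,m}f_{2-k,n}$ --- is $0$, and expanding that constant term using the gap structure of the two factors gives $a_k(m,n)+a_{2-k}(n,m)=0$. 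Specializing $n=\ell+1$ closes the induction and proves the theorem (and in fact proves level-$1$ Zagier duality along the way).

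The genuinely nontrivial input is this last duality; once the residue-theorem argument is in place the rest is bookkeeping, and the main care needed in writing it up is tracking the index ranges in the two recursions (and noting that the case $k\equiv2\pmod{12}$ is covered by taking $k'=14$, so $\ell=(k-14)/12$). An alternative, somewhat shorter route bypasses the recursions: from $f_{2-k}=f_2/f_k$ with $f_2=E_{14}/\Delta$ one gets $R(\tau,z)=\frac{f_k(\tau)}{f_k(z)}\cdot\frac{f_2(z)}{j(z)-j(\tau)}$, so using the weight-$0$ case $\frac{f_2(z)}{j(z)-j(\tau)}=\sum_{m\ge0}j_m(\tau)q^m$ (Asai--Kaneko--Ninomiya) and $f_{k,n}=f_kP_n(j)$, the theorem reduces to the identity $P_n(j(\tau))=\sum_{m\ge0}[\,1/f_k\,]_{q^{n-m}}\,j_m(\tau)$, which holds because both sides are polynomials in $j$ with the same principal part and constant term (a polynomial in $j$ being determined by these). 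The weight-$0$ case itself can be obtained from the recursion argument above.
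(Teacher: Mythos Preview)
The paper does not give its own proof of this statement: Theorem~\ref{JenkinsDuke} is quoted from \cite{Jenkins1} without argument, and Corollary~\ref{JenkinsDuke2} (level-$1$ duality) is cited there as a consequence of it. So there is nothing in this paper to compare your proof against for Theorem~\ref{JenkinsDuke} itself.

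That said, your argument is correct, and it is worth noting two things. First, you run the logic in the opposite direction from how the paper presents the level-$1$ story: in \cite{Jenkins1} the generating-function identity comes first and duality is the corollary, whereas you establish the single instance $a_k(m,\ell+1)=-a_{2-k}(\ell+1,m)$ of duality independently (via the residue theorem) and then use it to close a recursion that pins down each $q^m$-coefficient of $R(\tau,z)$. Second, your toolkit is precisely the one this paper deploys for its higher-level results: the residue-theorem vanishing of the constant term of a weight-$2$ form is Lemma~\ref{zero}; your duality step is the $N=1$, genus-$0$ specialization of the argument in Theorems~\ref{limittedduality2} and \ref{duality2}; and your multiply-by-$j$ recursion is the genus-$0$ analogue of the multiply-by-$f^{(p)}_{0,2}$ recursion that drives the proof of Theorem~\ref{generatingfunctions2}. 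So although you cannot be compared to a proof in this paper, your approach is very much in its spirit. The alternative route you sketch via $f_{2-k}=f_2/f_k$ and the weight-$0$ identity is also sound, with the caveat you already flag that the weight-$0$ case still needs the recursion argument.
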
 \noindent Here $\Delta$ is the discriminant modular form and $E_{k'}$ is the weight $k'$ Eisenstein series. As a corollary, the theorem gives duality between basis elements of weight $k$ and weight $2 - k$. \begin{cor}[\cite{Jenkins1}, Corollary 1]\label{JenkinsDuke2}
Let $k$ be an even integer. For all integers $m, n$ the equality \[a_k(m, n) = - a_{2 - k}(n,m)\] holds for the Fourier coefficients of the modular forms $f_{k,m}$ and $f_{2 - k, n}$.
\end{cor}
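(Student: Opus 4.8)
The plan is to deduce Corollary~\ref{JenkinsDuke2} from the bivariate generating function of Theorem~\ref{JenkinsDuke}. Write $q_z = e^{2\pi i z}$ and $q_\tau = e^{2\pi i \tau}$, and set $G_k(\tau, z) := f_k(\tau)\, f_{2-k}(z) / (j(z) - j(\tau))$. Because the denominator is antisymmetric and $f_{2-(2-k)} = f_k$, we have $G_k(\tau,z) = -\,G_{2-k}(z,\tau)$; applying Theorem~\ref{JenkinsDuke} in weights $k$ and $2-k$ therefore yields the two expansions
\[ \sum_{m \geq -\ell} f_{k,m}(\tau)\, q_z^m \;=\; G_k(\tau,z) \;=\; -\sum_{m \geq \ell+1} f_{2-k,m}(z)\, q_\tau^m . \]
Here $f_k = \Delta^\ell E_{k'}$ vanishes to order $\ell$ at $\infty$, while $f_{2-k} = \Delta^{-\ell-1} E_{14-k'}$ vanishes to order $-\ell-1$, which is why the weight-$(2-k)$ sum begins at $m = \ell+1$. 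In the reduced row echelon normalization $f_{k,m} = q^{-m} + \sum_{n \geq \ell+1} a_k(m,n) q^n$, reading off the coefficient of $q_\tau^n$ from the first expansion picks out $a_k(m,n)$ when $m \geq -\ell$ and $n \geq \ell+1$ (the principal part $q_\tau^{-m}$ never interferes, since $n = -m$ is incompatible with those inequalities), and reading off the coefficient of $q_z^m$ from the second picks out $-a_{2-k}(n,m)$; matching the two should give $a_k(m,n) = -a_{2-k}(n,m)$.

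The step demanding genuine care --- and the main obstacle --- is that the two expansions of $G_k$ converge on the \emph{disjoint} regions $\mathrm{Im}\, z > \mathrm{Im}\,\tau$ and $\mathrm{Im}\,\tau > \mathrm{Im}\, z$, so they cannot be equated as formal double $q$-series; one must pass from one to the other across the pole of $G_k$ along the diagonal $z \equiv \tau$. I would do this by extracting $\frac{1}{2\pi i}\oint G_k(\tau,z)\, q_z^{-m-1}\, dq_z$ over a small $q_z$-circle --- where it equals $f_{k,m}(\tau)$ --- and then enlarging the contour past the locus $q_z = q_\tau$ into the complementary region, where the integrand is governed by the second expansion and the same integral becomes $-\sum_{m' \geq \ell+1} a_{2-k}(m',m)\, q_\tau^{m'}$. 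The enlargement crosses exactly the simple pole at $q_z = q_\tau$ (once $\mathrm{Im}\,\tau$ is large, it is the only pole of $G_k$ in the intervening annulus), and the crucial input is that its residue is precisely $-q_\tau^{-m}$: this follows from the identity $f_k(\tau)\, f_{2-k}(\tau) = E_{14}(\tau)/\Delta(\tau) = -\tfrac{1}{2\pi i}\,dj/d\tau$, which in turn holds because $E_a E_b = E_{14}$ whenever $a + b = 14$ with $a, b \in \{0,4,6,8,10,14\}$ (both sides lie in the one-dimensional space $M_{14}$ and have constant term $1$), or, more conceptually, because the absence of nonzero holomorphic weight-$2$ level-$1$ forms pins down the weight-$2$ weakly holomorphic form $f_k f_{2-k}$ --- which has a pole only at $\infty$ with principal part $q^{-1}$ --- as $-\tfrac{1}{2\pi i}\,dj/d\tau$.

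Granting this residue computation, the contour comparison yields $f_{k,m}(\tau) - q_\tau^{-m} = -\sum_{m' \geq \ell+1} a_{2-k}(m',m)\, q_\tau^{m'}$, and since the left side is $\sum_{n \geq \ell+1} a_k(m,n)\, q_\tau^n$, comparing coefficients of $q_\tau^n$ gives $a_k(m,n) = -a_{2-k}(n,m)$ for all $m \geq -\ell$, $n \geq \ell+1$. A last bookkeeping check confirms that this range is exactly the set of $(m,n)$ for which $f_{k,m}$ and $f_{2-k,n}$ both exist, using the reflection $\ell_k + \ell_{2-k} = -1$ between orders of vanishing at $\infty$. As a cross-check, there is also a generating-function-free argument that avoids the region issue altogether: the product $f_{k,m} f_{2-k,n}$ is a weight-$2$ weakly holomorphic level-$1$ form with a pole only at $\infty$, hence equals $\tfrac{d}{dz}$ of a polynomial in $j$ and so has vanishing constant term; expanding the product, the gap structure of the reduced row echelon bases together with $\ell_k + \ell_{2-k} = -1$ forces the only contributions to the constant term to be $a_k(m,n)$ and $a_{2-k}(n,m)$, so these sum to zero.
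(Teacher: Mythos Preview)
Your proposal is correct. The paper does not give its own proof of Corollary~\ref{JenkinsDuke2}; it is quoted from \cite{Jenkins1} and simply labeled a corollary of the generating function in Theorem~\ref{JenkinsDuke}. Your primary argument---exploiting the antisymmetry $G_k(\tau,z)=-G_{2-k}(z,\tau)$, then deforming a $q_z$-contour across the simple pole at $q_z=q_\tau$ and computing the residue via $f_k f_{2-k}=E_{14}/\Delta=-\tfrac{1}{2\pi i}\,j'$---is exactly the rigorous route from Theorem~\ref{JenkinsDuke} to the corollary, and all the ingredients you identify (the pairing $k'\leftrightarrow 14-k'$, the one-dimensionality of $M_{14}$, and the relation $\ell_k+\ell_{2-k}=-1$) are correct.

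What is worth noting is that your brief ``cross-check'' at the end is not merely a sanity check but is precisely the method this paper adopts for its own new results. The proofs of Theorems~\ref{limittedduality2} and~\ref{duality2} proceed by forming the product $\fp pkm\,\gp p{2-k}n\in S_2^\sharp(p)$, invoking Lemma~\ref{zero} to conclude its constant term vanishes, and then checking that the gap patterns of the two bases interlock so that every cross term $a_k(m,i)\,b_{2-k}(n,-i)$ dies. Your level-$1$ version of this---$f_{k,m}f_{2-k,n}$ is weight $2$, weakly holomorphic, hence $\tfrac{d}{dz}$ of a polynomial in $j$, hence has no constant term, while $\ell_k+\ell_{2-k}=-1$ forces the cross terms to vanish---is the exact template. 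So although your contour-deformation argument is the natural deduction from Theorem~\ref{JenkinsDuke} as the paper frames it, your secondary argument is the one that generalizes and is closer in spirit to what the paper actually does in higher level.
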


For $N$ a natural number, denote by $M_k^\sharp(\Gamma_0(N)) = M_k^\sharp(N)$ the space of weakly holomorphic modular forms of level $N$ that are holomorphic away from $\infty$, and denote by $S_k^\sharp(\Gamma_0(N)) = S_k^\sharp(N)$ the space of weakly holomorphic modular forms of level $N$ which vanish at every cusp other than $\infty$. Let $\{ \fp Nkm (z) = q^{-m} + \sum\limits_{n > -m} a_k^{(N)}(m, n) q^n \}_m$ be the reduced row echelon basis for $M_k^\sharp(N)$ and let $\{ \gp Nkm (z) = q^{-m} + \sum\limits_{n > -m} b_k^{(N)}(m, n) q^n \}_m$ be the reduced row echelon basis for $S_k^\sharp(N)$. We define $a_k^{(N)}(m,n)$ to be $0$ if there is no $q^n$ term in $f_{k,m}^{(N)}$, or if $f_{k,m}^{(N)}$ does not exist, and we define $a_k^{(N)}(m,-m)$ to be $0$ even though the coefficient of $q^{-m}$ is 1. As the only cusp of $\SL_2(\ZZ) = \Gamma_0(1)$ is $\infty$, we see $M_k^!(1) = M_k^\sharp(1) = S_k^\sharp(1)$, and so Theorem \ref{JenkinsDuke} and Corollary \ref{JenkinsDuke2} may be thought of as proving duality between $\{ \fp 1km (z) \}_m$ and $\{ \gp 1{2 - k}m (z) \}_m$. In collaboration with others, the first author proved duality of this sort for every $N$ of genus zero \cite{Jenkins1, Jenkins2, Jenkins3, Jenkins4, Jenkins6, Jenkins5}.

The driving spirit behind this paper is to extend these duality results and to derive generating functions for the sequences associated to prime levels of nonzero genus. Our first results prove duality between weights congruent to $0$ and $2$ $\pmod{p-1}$ for an infinite class of primes $p$, and duality between arbitrary even weights $k \in \ZZ$ for several small primes. \begin{thm}\label{limitedduality}
	Let $p \not\equiv 1 \pmod{12}$ be prime of genus $g_0 > 0$, and let $k \in 2 \ZZ$ satisfy $k \equiv 0 \pmod{p - 1}$ or $k \equiv 2 \pmod{p - 1}$. Let $\fp pkm (z)$ and $\gp pkm (z)$ be as above. Then for all $m, n \in \ZZ$, we have \[a_k^{(p)}(m,n) = -b_{2-k}^{(p)}(n,m).\]
\end{thm}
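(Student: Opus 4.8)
The plan is to follow the product-and-residues template common to the duality theorems cited above, the new ingredient being a Riemann--Roch analysis on $X_0(p)$ of which pole orders occur. Fix $m, n \in \ZZ$ and put $h = \fp pkm \cdot \gp p{2-k}n$, reading a nonexistent basis element as $0$ (in which case both sides of the asserted identity vanish by the stated conventions). Since $\fp pkm$ is holomorphic on $\HH$ and at every cusp other than $\infty$, while $\gp p{2-k}n$ is holomorphic on $\HH$ and vanishes at every cusp other than $\infty$, the product $h$ lies in $S_2^\sharp(p)$. For any $\phi \in S_2^\sharp(p)$, the form $\phi(z)\,dz$ descends to a meromorphic differential on $X_0(p)$ that is holomorphic away from $\infty$ --- the weight-$2$ automorphy factor makes the local expansion compatible with the quotient even at elliptic points --- so by the residue theorem on the compact surface $X_0(p)$ its residue at $\infty$ vanishes, and since that cusp has width $1$ this residue equals $\frac{1}{2\pi i}$ times the constant term of $\phi$. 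Hence every element of $S_2^\sharp(p)$, in particular $h$, has vanishing constant term.

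Let $D_k = \{d \in \ZZ : \fp pkd \text{ exists}\}$ and $E_{2-k} = \{e \in \ZZ : \gp p{2-k}e \text{ exists}\}$ be the sets of occurring leading exponents, with the sign convention that $d$ encodes a pole of order $d$ (a zero of order $-d$) at $\infty$. Reduced row echelon form forces $a_k^{(p)}(m,i) \ne 0 \Rightarrow i > -m$ and $-i \notin D_k$, and $b_{2-k}^{(p)}(n,-i)\ne 0 \Rightarrow i < n$ and $i \notin E_{2-k}$. The theorem then follows from the \textbf{complementarity statement}
\[ \ZZ \;=\; D_k \,\sqcup\, (-E_{2-k}). \]
Indeed, the constant term of $h$ is $\sum_i [q^i]\fp pkm \cdot [q^{-i}]\gp p{2-k}n$; a summand with $-m < i < n$ equals $a_k^{(p)}(m,i)\,b_{2-k}^{(p)}(n,-i)$ and must vanish, since otherwise $-i$ would lie in neither $D_k$ nor $-E_{2-k}$, while summands with $i < -m$ or $i > n$ vanish trivially, so when $m+n>0$ only the terms $i=-m$ and $i=n$ remain and give $b_{2-k}^{(p)}(n,m) + a_k^{(p)}(m,n) = 0$. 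The cases $m+n \le 0$, and those in which a basis element fails to exist, follow from the conventions together with the \emph{disjointness} $D_k \cap (-E_{2-k}) = \emptyset$, which is the easy half of complementarity: if $\fp pkm$ and $\gp p{2-k}{-m}$ both existed their product would lie in $S_2^\sharp(p)$ with constant term $1$, contradicting the first paragraph.

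It remains to prove the covering half $D_k \cup (-E_{2-k}) = \ZZ$, where the hypotheses on $p$ and $k$ enter and where I expect the main difficulty. Realize the weight-$k$ forms holomorphic away from $\infty$ with a pole of order $\le \delta$ at $\infty$ as $H^0(\mathcal{A}_k(\delta\,\infty))$, and the corresponding subspace of $S_{2-k}^\sharp(p)$ as $H^0(\mathcal{B}_{2-k}(\delta\,\infty))$, for line bundles $\mathcal{A}_k, \mathcal{B}_{2-k}$ on $X_0(p)$; the multiplication pairing into $S_2^\sharp(p)$ gives an injection of line bundles $\mathcal{A}_k \otimes \mathcal{B}_{2-k} \hookrightarrow K_{X_0(p)} \otimes \mathcal{O}(\infty)$, which is an isomorphism once the degrees are seen to agree. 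Granting
\[ \mathcal{A}_k \otimes \mathcal{B}_{2-k} \;\cong\; K_{X_0(p)} \otimes \mathcal{O}(\infty), \]
Serre duality gives $h^1(\mathcal{A}_k(m\,\infty)) = h^0(\mathcal{B}_{2-k}(-(m+1)\,\infty)) = \dim\{\phi \in S_{2-k}^\sharp(p) : \ord_\infty \phi \ge m+1\}$, and matching the Riemann--Roch jump of $h^0(\mathcal{A}_k(m\,\infty))$ in $m$ against this formula yields precisely $m \notin D_k \iff -m \in E_{2-k}$.

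Finally I would establish the bundle identity by reduction to $k \in \{0,2\}$, where it is transparent: for $k = 2$, Kodaira--Spencer gives $\mathcal{A}_2 \cong K_{X_0(p)} \otimes \mathcal{O}([\infty]+[0])$ with no elliptic correction in weight $2$, and $\mathcal{B}_0 \cong \mathcal{O}(-[0])$ is the ideal sheaf of the cusp $0$, so $\mathcal{A}_2 \otimes \mathcal{B}_0 \cong K_{X_0(p)} \otimes \mathcal{O}(\infty)$; the case $k=0$ is analogous. For general $k \equiv 0$ or $2 \pmod{p-1}$, write $k = (p-1)t$ or $(p-1)t+2$ and transport the identity by multiplication by $\Xi^{|t|}$ in the appropriate direction, where $\Xi = \eta(z)^{2p}\eta(pz)^{-2}$: the Ligozat conditions (using $24 \mid p^2-1$) show $\Xi$ is a holomorphic form on $\Gamma_0(p)$ with trivial character, nonvanishing on $\HH$, with $\ord_\infty \Xi = 0$ and divisor $\tfrac{p^2-1}{12}\,[0]$ supported at the cusp $0$, so multiplying by $\Xi$ replaces $(\mathcal{A}_k, \mathcal{B}_{2-k})$ by $(\mathcal{A}_k \otimes \mathcal{O}(\tfrac{p^2-1}{12}[0]),\, \mathcal{B}_{2-k} \otimes \mathcal{O}(-\tfrac{p^2-1}{12}[0]))$ and leaves the tensor product unchanged. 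The delicate point --- the one most in need of care --- is the line-bundle bookkeeping at elliptic points: the hypothesis $p \not\equiv 1 \pmod{12}$ is exactly that $X_0(p)$ carries elliptic points of at most one order, which keeps the floor terms in the divisors of $\mathcal{A}_k$ and $\mathcal{B}_{2-k}$ in step (when both kinds of elliptic point are present the identity, hence the duality, can fail outside the range handled computationally), while $g_0 > 0$ places us outside the genus-zero regime already treated in \cite{Jenkins1,Jenkins5}.
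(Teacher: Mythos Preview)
Your approach is quite different from the paper's. The paper never formulates your complementarity statement or invokes Riemann--Roch; it proceeds constructively. Using the Ahlgren--Masri--Rouse bound on $s_{p,p-1}$ (Theorem~2.5) together with Ogg's result that $\infty$ is not a Weierstrass point of $X_0(p)$, it shows that $S_{p-1}(p)$ has no gaps when $p\not\equiv 1\pmod{12}$, then explicitly builds the reduced bases of $M_0^\sharp(p)$, $S_0^\sharp(p)$, $M_2^\sharp(p)$, $S_2^\sharp(p)$ and records exactly which indices occur (Proposition~3.1 through Corollary~3.4). Duality is then the direct check that the cross-terms in the constant coefficient of $f^{(p)}_{k,m}\,g^{(p)}_{2-k,n}$ vanish, using this explicit gap description. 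The restriction $p\not\equiv 1\pmod{12}$ is purely an artifact of the AMR bound being one off in that residue class; the paper's method also yields the explicit shape of the basis elements, which feeds into the generating-function theorem later.

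Your Serre-duality route is sound, but you are too pessimistic about the elliptic bookkeeping, and your diagnosis of where the hypothesis enters is wrong. The forced vanishing order of a weight-$j$ form at an elliptic point of order $e$ is the representative $r_j\in\{0,\dots,e-1\}$ of $-j/2\pmod e$; since $(-k/2)+(-(2-k)/2)=-1$, one gets $r_k+r_{2-k}=e-1=r_2$ for \emph{every} even $k$ and every $e$, so the multiplication map $\mathcal A_k\otimes\mathcal A_{2-k}\to\mathcal A_2$ is a local isomorphism at each elliptic point regardless of how many orders are present. Your bundle identity therefore holds for all even $k$ and all primes $p$, and with it the complementarity and the duality --- your method actually proves Theorems~1.3 and~1.4 simultaneously, with no hypothesis on $p\bmod 12$ or on $k\bmod(p-1)$. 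Your closing assertion that the identity ``can fail'' when both elliptic orders are present is thus mistaken. A minor point: the transport step is cleaner with the paper's $\Lambda_p=\eta(pz)^{2p}/\eta(z)^2$, whose divisor is supported at $\infty$, since multiplication by $\Lambda_p$ is a genuine bijection $M_k^\sharp(p)\to M_{k+p-1}^\sharp(p)$ and $S_{2-k}^\sharp(p)\to S_{3-k-p}^\sharp(p)$, shifting $D_k$ and $-E_{2-k}$ by the same constant; but once the elliptic check above is made, no transport is needed at all.
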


\begin{thm}\label{duality}
	Let $p \in \{11, 17, 19, 23, 29, 31, 37\}$, and let $k \in 2 \ZZ$ be arbitrary. Let $\fp pkm = q^{-m} + \sum\limits_{n} a_{k}^{(p)}(m,n) q^n$ and $\gp {p}{k}m (z) = q^{-m} + \sum\limits_{n} b_{k}^{(p)}(m,n) q^n$ be as above. Then for all $m, n \in \ZZ$, we have \[a_k^{(p)}(m,n) = -b_{2-k}^{(p)}(n,m).\]
\end{thm}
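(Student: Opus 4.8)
The plan is to run the standard residue argument, which reduces the coefficient identity to a combinatorial fact about the pole orders realized by the two spaces; then to use the reduced row echelon normalization to kill the resulting cross terms; and finally to prove that combinatorial fact via Riemann--Roch and Serre duality on $X_0(p)$.

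First I would fix $m,n$. If $m+n\le 0$, both sides of the asserted equality vanish directly from the conventions on leading and out-of-range Fourier coefficients, so assume $m+n>0$ and that $\fp pkm$ and $\gp{p}{2-k}n$ both exist. Their product $h$ is weakly holomorphic of weight $2$ on $\Gamma_0(p)$: it is holomorphic on $\HH$, holomorphic at the cusp $0$ because $\fp pkm$ is, and in fact vanishes there because $\gp{p}{2-k}n$ does. Since weight-$2$ forms on $\Gamma_0(p)$ correspond to differentials with singularities only at the cusps, $h\,\tfrac{dq}{q}$ descends to a meromorphic differential on $X_0(p)$ whose only pole lies over $\infty$. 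By the residue theorem its residue at $\infty$ --- that is, the constant term of $h$ --- vanishes; expanding $\fp pkm\cdot\gp{p}{2-k}n$ and collecting the coefficient of $q^0$, with one contribution from each series' leading term against the other's tail, gives
\[
0 \;=\; a_k^{(p)}(m,n) \;+\; b_{2-k}^{(p)}(n,m) \;+\; \sum_{-m<j<n} a_k^{(p)}(m,j)\,b_{2-k}^{(p)}(n,-j).
\]

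Next, let $\mathcal A_k\subseteq\ZZ$ be the set of pole orders realized by the reduced row echelon basis of $M_k^\sharp(p)$ and $\mathcal B_{2-k}$ the analogous set for $S_{2-k}^\sharp(p)$. Reducedness forces $a_k^{(p)}(m,j)=0$ whenever $-j\in\mathcal A_k$ and $j\ne-m$, and $b_{2-k}^{(p)}(n,-j)=0$ whenever $j\in\mathcal B_{2-k}$ and $j\ne n$; since in the middle sum $-m<j<n$ makes both side conditions automatic, a summand can be nonzero only if simultaneously $-j\notin\mathcal A_k$ and $j\notin\mathcal B_{2-k}$. Thus the sum vanishes --- and the degenerate cases where $\fp pkm$ or $\gp{p}{2-k}n$ fails to exist are dispatched in the same way --- once we know that $-\mathcal A_k$ and $\mathcal B_{2-k}$ cover $\ZZ$. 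What I expect, and would prove, is the sharper partition $m\in\mathcal A_k\iff -m\notin\mathcal B_{2-k}$.

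Finally, to establish the partition I would write $M_k^\sharp(p)=\bigcup_{N\ge0}H^0\bigl(X_0(p),\mathcal O(\lfloor\mathfrak D_k\rfloor+N\infty)\bigr)$, where $\mathfrak D_k$ is the divisor attached to weight-$k$ forms on $\Gamma_0(p)$ (with fractional coefficients at the elliptic points), so that $m\in\mathcal A_k$ exactly when $h^0(\lfloor\mathfrak D_k\rfloor+m\infty)>h^0(\lfloor\mathfrak D_k\rfloor+(m-1)\infty)$, and present $S_{2-k}^\sharp(p)$ similarly using $\lfloor\mathfrak D_{2-k}\rfloor-0$. Riemann--Roch turns the jump defining ``$m\in\mathcal A_k$'' into $1$ minus the corresponding jump of $h^1(\lfloor\mathfrak D_k\rfloor+m\infty)$, and Serre duality rewrites that $h^1$ as an $h^0$ of a twist of $K-\lfloor\mathfrak D_k\rfloor$; since $\mathfrak D_k+\mathfrak D_{2-k}=\mathfrak D_2=K+\infty+0$, the twist is exactly the space of weight-$(2-k)$ forms vanishing at the cusp $0$ with order $\ge m+1$ at $\infty$, whose jump at $m$ records whether $-m\in\mathcal B_{2-k}$. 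Comparing the two counts gives $m\in\mathcal A_k\iff -m\notin\mathcal B_{2-k}$. The main obstacle is that the divisor identity $\lfloor\mathfrak D_k\rfloor+\lfloor\mathfrak D_{2-k}\rfloor=\lfloor\mathfrak D_2\rfloor$ on which this rests genuinely fails at an order-$3$ elliptic point when $k\equiv4\pmod6$ (with a milder, absorbable discrepancy at order-$2$ points), which is why for a general prime the clean partition --- and hence the duality --- is only guaranteed under a congruence on $k$, as in Theorem \ref{limitedduality}. For $p\in\{11,17,19,23,29,31,37\}$, however, $X_0(p)$ has genus $1$ or $2$ and at most one elliptic point of each order, so each of $\mathcal A_k$, $\mathcal B_{2-k}$ omits at most two integers; since the relevant dimensions are eventually linear in the pole order and depend on $k$ only through its residue modulo a fixed integer, verifying that $-\mathcal A_k$ and $\mathcal B_{2-k}$ partition $\ZZ$ reduces to a finite computation, one residue class of $k$ at a time, which succeeds for every even $k$. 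For the genus-$1$ primes $11,17,19$ one can alternatively package the result into an explicit bivariate generating function in the style of Theorem \ref{JenkinsDuke}, from which the duality is immediate.
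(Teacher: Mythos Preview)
Your residue argument and the reduction to showing that $-\mathcal A_k$ and $\mathcal B_{2-k}$ partition $\ZZ$ are exactly what the paper does: the product lands in $S_2^\sharp(p)$, its constant term vanishes by the residue theorem (the paper's Lemma~\ref{zero}), and the cross terms die once the gap sets interlock.

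Where you diverge is in \emph{how} you establish the partition. The paper never invokes Riemann--Roch or Serre duality. Instead it uses the $\eta$-product $\Lambda_p$ to reduce every weight to a representative $2<k'\le p-1$, shows (via an Ahlgren--Masri--Rouse bound and a short induction) that the $\sharp$-gap sets agree with the holomorphic gap sets $\Upsilon_M(p,k'),\ \Upsilon_S(p,k')$, and then simply computes those finitely many gap sets in Sage and tabulates them, checking case by case that they mesh. Your Riemann--Roch/Serre-duality approach is more intrinsic and, where it applies cleanly (e.g.\ $p=11,23$, which have \emph{no} elliptic points), would give the partition with no computation at all, which is nicer than what the paper does.

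Two points to flag. First, a factual slip: $\Gamma_0(p)$ has $1+\bigl(\tfrac{-1}{p}\bigr)$ elliptic points of order $2$ and $1+\bigl(\tfrac{-3}{p}\bigr)$ of order $3$, so the count is $0$ or $2$, never ``at most one''; for $p=37$ there are two of each. This does not break your strategy, but it does affect any bound you derive from it on the number of omitted integers. Second, once you acknowledge the floor-function obstruction at the elliptic points (you correctly locate it at order-$3$ points when $k\equiv 4\pmod 6$, with analogous trouble at order-$2$ points when $k\equiv 2\pmod 4$), your argument, like the paper's, bottoms out in a finite verification. The paper actually performs that verification and records the resulting gap sets; you assert that it ``succeeds for every even $k$'' without carrying it out. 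To make your proof complete you would need either to supply that check or to sharpen the divisor bookkeeping so that the partition follows without it.
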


The papers \cite{Jenkins1, Jenkins3, Jenkins4} provide explicit formulas for generating functions of canonical bases for levels of genus zero. These results are analogous to Theorem \ref{JenkinsDuke}. In \cite{ElGuindy}, El-Guindy also gave formulas for a variety of generating functions for somewhat different sequences of forms in hyperelliptic levels. Our next result gives generating functions associated with the canonical bases for $M_k^\sharp(p)$ and $S_k^\sharp(p)$ for primes of genus 1. These generating functions are more complicated than in the genus zero case.

Let $v_\infty(f)$ denote the order of vanishing of $f$ at $\infty$ as a function of $q$. Let $F_k^{(p)}(z, \tau) = \sum\limits_m \fp pkm (\tau) q^m$, and let $n_0 = - v_\infty(F_k^{(p)})$ be the index of the first basis element of $M_k^\sharp(p)$. Define $f(z) = \fp p02 (z) = q^{-2} + \sum\limits_{n = -1}^\infty a_n q^n$ and $g(z) = \gp p02 (z) = q^{-2} + \sum\limits_{n = -1}^\infty b_n q^n$.

With this notation, we have the following theorem. \begin{thm}\label{generatingfunctions}
	Let $p \in \{11, 17, 19\}$, so $p$ is a prime level of genus 1. Let $k \in 2 \ZZ$ be arbitrary. Then if there are no gaps in the basis $\{\fp {p}km (\tau)\}_m$, we have \begin{align*}
		F_k^{(p)}(z, \tau) &= \frac{\big( a_{-1} \gp p{2 - k}{n_0 + 1}(z) + \gp p{2-k}{n_0 + 2} (z) \big) \fp pk{- n_0} (\tau) + \gp p{2 - k}{n_0 + 1}(z) \fp pk{- n_0 + 1} (\tau)}{f(z) - f(\tau)} \\
		&= \frac{\big( b_{-1} \fp pk{- n_0} (\tau) + \fp pk{- n_0 + 1} (\tau) \big) \gp p{2 - k}{n_0 + 1}(z) + \fp pk{- n_0} (\tau) \gp p{2-k}{n_0 + 2} (z)}{g(z) - g(\tau)}
	\end{align*} and otherwise \begin{align*}
		F^{(p)}_k(z, \tau) =& \frac{\big( a_1 \gp p{2 - k}{n_0 - 1} (z) + a_{-1} \gp p{2 - k}{n_0 + 1} (z) + \gp p{2 - k}{n_0 + 2} (z) \big) \fp pk{-n_0} (\tau)}{f(z) - f(\tau)} \\
		&+ \frac{a_{-1} \gp p{2 - k}{n_0 - 1} (z) \fp pk{-n_0 + 2} (\tau) + \gp p{2 - k}{n_0 - 1} (z) \fp pk{-n_0 + 3} (\tau)}{f(z) - f(\tau)} \\
		=& \frac{\big( b_1 \fp pk{-n_0} (\tau) + b_{-1} \fp pk{-n_0 + 2} (\tau) + \fp pk{-n_0 + 3} (\tau) \big) \gp p{2 - k}{n_0 - 1} (z)}{g(z) - g(\tau)} \\
		&+ \frac{b_{-1} \fp pk{-n_0} (\tau) \gp p{2 - k}{n_0 + 1} (z) + \fp pk{-n_0} (\tau) \gp p{2 - k}{n_0 + 2} (z)}{g(z) - g(\tau)} .
	\end{align*}
\end{thm}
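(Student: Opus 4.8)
The plan is to mimic the strategy of Theorem \ref{JenkinsDuke}: one constructs the claimed right-hand side explicitly from canonical basis elements, shows that as a function of $\tau$ it is a weakly holomorphic modular form of weight $k$ and level $p$ with the correct principal part, and then matches $q$-expansions in $z$ order by order against $F_k^{(p)}(z,\tau) = \sum_m \fp pkm(\tau) q^m$. The essential point is that $f(z) - f(\tau)$, with $f = \fp p02$ a Hauptmodul-like generator with a double pole, has a simple zero along $z = \tau$ (together with the Fricke involute $z = W_p\tau$ when $p$ has genus $1$ — this is where the denominator being quadratic in the $q$-expansion variable, rather than linear as in level $1$, forces the more elaborate numerator). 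So I would first analyze the divisor of $f(z) - f(\tau)$ on $X_0(p) \times X_0(p)$: it vanishes exactly when $z$ and $\tau$ lie in the same fiber of the degree-two map $z \mapsto f(z)$, i.e.\ at $z \in \{\tau, W_p\tau\}$, and has a double pole at $z = \infty$ and at $\tau = \infty$.

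Next I would pin down the numerator. Writing $N_k(z,\tau)$ for the numerator in the claimed formula, one checks that for fixed generic $\tau$, $N_k(z,\tau)/(f(z)-f(\tau))$ is a weakly holomorphic form in $z$ of weight $2-k$ that is holomorphic at every cusp except $\infty$: the only possible pole in $z$ comes from the double pole of $1/(f(z)-f(\tau))$ at $z = \infty$, and the extra zero that must be cancelled at $z = W_p\tau$ is arranged precisely by the particular combination of $\gp p{2-k}{*}(z)$ appearing, using the known values $a_{-1}, a_1$ (resp.\ $b_{-1},b_1$) of the coefficients of $f$ (resp.\ $g$) near the pole. Symmetrically, for fixed generic $z$ it is a weight-$k$ form in $\tau$ holomorphic away from $\infty$. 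The two displayed forms (the $f$-version and the $g$-version) should be shown equal by the same divisor bookkeeping, using that $f$ and $g$ differ by a form vanishing at the non-$\infty$ cusps — indeed $g = \gp p02$ is the $S^\sharp$ analogue of $f = \fp p02$ — so $f(z)-f(\tau)$ and $g(z)-g(\tau)$ have the same zero locus $\{z=\tau\}\cup\{z=W_p\tau\}$ and cancel against correspondingly adjusted numerators.

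With holomorphy established, I would extract the coefficient of $q^m$ in the $z$-expansion of $N_k(z,\tau)/(f(z)-f(\tau))$: by the echelon normalization this coefficient is a weight-$k$ weakly holomorphic form in $\tau$ with leading term $q^{-m}$ and no other principal part at the relevant indices, hence equals $\fp pkm(\tau)$ exactly — provided the principal parts line up, which is the content of checking finitely many leading coefficients of $f$, $g$, and the low-index basis elements, and is where the case split (gaps vs.\ no gaps in the basis) enters. The main obstacle is this bookkeeping of principal parts and the verification that the chosen numerator kills the spurious pole at $z = W_p\tau$ while leaving exactly a simple pole at $z = \infty$; once the divisor of $f(z)-f(\tau)$ and the orders of vanishing of the canonical basis elements are in hand, the rest is a finite check. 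For $p \in \{11,17,19\}$ one can, if desired, close the argument by a direct computation of enough Fourier coefficients, since these are the only genus-$1$ prime levels and the relevant congruence constraints on $k \bmod (p-1)$ restrict the shape of the principal parts to finitely many cases.
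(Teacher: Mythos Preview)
Your approach is geometric---analyze the divisor of $f(z)-f(\tau)$, show the numerator cancels the unwanted zeros, then match principal parts---whereas the paper's proof is purely algebraic: it derives a recurrence
\[
f_{k,n+2}(\tau) = f(\tau)\,f_{k,n}(\tau) - \sum_{i=-1}^{n+n_0-2} a_i\,f_{k,n-i}(\tau) + (\text{correction terms expressible via duality as } b_{2-k}(\cdot,\cdot)),
\]
substitutes this into $F_k(z,\tau)=\sum_m f_{k,m}(\tau)q^m$, and telescopes to obtain $(f(z)-f(\tau))F_k = N_k(z,\tau)$ directly. The second equality (with $g$ in place of $f$) then falls out because $f-g$ is a constant. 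No divisor analysis, no reference to any involution, no holomorphy argument at the other cusp---just formal $q$-series manipulation together with the already-proved duality (Theorem~\ref{duality}).

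Your outline also contains a concrete error. You assert that the second zero of $f(z)-f(\tau)$ lies at $z=W_p\tau$, i.e.\ that $f\circ W_p=f$. But $f=\fp p02$ has its only pole at the cusp $\infty$ (a double pole) and is holomorphic at the cusp $0$; since $W_p$ swaps these cusps, $f\circ W_p$ has its pole at $0$, so $f\circ W_p\neq f$. The degree-two map $z\mapsto f(z)$ does have a deck involution $\iota$ with $f\circ\iota=f$, but $\iota$ fixes $\infty$ (a ramification point) and is \emph{not} the Fricke involution. Consequently your proposed mechanism for ``killing the spurious pole at $z=W_p\tau$'' is aimed at the wrong point, and the claim that the $f$- and $g$-denominators share the zero locus $\{z=\tau\}\cup\{z=W_p\tau\}$ is not how the equality of the two expressions is obtained (in the paper it is simply because $f-g$ is constant, so $f(z)-f(\tau)=g(z)-g(\tau)$). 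A divisor-theoretic proof along your lines may be possible once $\iota$ is correctly identified, but you would then need to actually verify that the numerator vanishes at $z=\iota(\tau)$, which is not something a ``finite check of Fourier coefficients'' establishes without first knowing modularity of both sides in $z$; the paper's recurrence argument sidesteps all of this.
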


A natural next step would be to extend these results to general $M_k^\sharp(N)$ and $S_k^\sharp(N)$, as duality appears to hold in levels and weights beyond the scope of this paper (see for instance~\cite{Adams, Kit}). Work on harmonic Maass forms and mock modular forms in \cite{Kane, Bruinier, Griffin} demonstrates a form of duality between Poincar\'{e} series; in addition, the Bruiner-Funke pairing of \cite{Bruinier} is related to the sums that show up in the proofs of Theorems \ref{limitedduality} and \ref{duality}.

In section \ref{Preliminaries} we establish several definitions and lemmas which will be necessary to prove our results. In section \ref{section02} we describe the Fourier expansions of the canonical basis elements in weights congruent to $0 \text{ and } 2 \pmod{p-1}$ and prove Theorem \ref{limitedduality}. In section \ref{OtherWeights} we prove Theorem \ref{duality}, and in section \ref{GeneratingFunctions}, we prove Theorem \ref{generatingfunctions}.

The authors thank Scott Ahlgren, Nick Andersen, Michael Griffin, Ben Kane, and Jeremy Rouse for their invaluable insights.

\section{Preliminaries}\label{Preliminaries}

A weight $k$ weakly holomorphic modular form is a function which is holomorphic on the upper half-plane $\HH = \{ z \in \CC : \Im(z) > 0 \}$, meromorphic at the cusps, and modular of weight $k$ with respect to some congruence group $\Gamma \subseteq \SL_2(\ZZ)$.  Let $\fp Nkm$ and $\gp Nkm$ be the canonical bases for $M_k^\sharp(N)$ and $S_k^\sharp(N)$ described above. These bases are canonical in the sense that given any form $f(z) = \sum a_n q^n \in M_k^\sharp(N)$, we may decompose $f$ as a finite sum $f(z) = \sum\limits_m a_m \fp Nk{-m} (z)$, and a similar decomposition holds for forms in $S_k^\sharp(N)$. Note that this sum is finite, because $\fp Nkn (z)$ is not defined for any $n$ sufficiently negative, and $f \in M_k^\sharp(N)$ has only finitely many terms of negative order.

For $N$ of nonzero genus, $M_0^!(N)$ has no Hauptmodul. In other words, there is no modular function $\chi = q^{-1} + \ldots \in M_0^!(N)$ which generates all of $M_0^!(N)$. Without a Hauptmodul, there may be gaps in the sequence of forms $\{ \fp Nkm \}_m$. Indeed, consider $N = p$ prime with genus $g_0 > 0$. It is clear that $1 = \fp p00 \in M_0^\sharp(p)$, but the next basis element of $M_0^\sharp(p)$ turns out to be $\fp p0{g_0 + 1}$, which demonstrates that gaps in this basis not only exist but can be arbitrarily large. The following lemma implies that the basis $\{ \gp p 2 m \}_m$ also has gaps whenever $p$ is of nonzero genus.

\begin{lemma}\label{zero}
	If $f \in S^\sharp_2(N)$, then $f$ has no constant term.
\end{lemma}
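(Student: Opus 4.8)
The plan is to reinterpret a weight $2$ form as a meromorphic differential on the modular curve and apply the residue theorem. Given $f = \sum_n a_n q^n \in S_2^\sharp(N)$, set $\omega_f = 2 \pi i \, f(z)\, dz = f(z) \, \frac{dq}{q}$. Because $f$ has weight $2$ on $\Gamma_0(N)$, the differential $f(z)\, dz$ is invariant under $\Gamma_0(N)$, so $\omega_f$ descends to a meromorphic differential on the compact Riemann surface $X_0(N)$.

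First I would pin down the polar locus of $\omega_f$ on $X_0(N)$. On the image of $\HH$ the form $\omega_f$ is holomorphic: away from elliptic points this is immediate from the holomorphy of $f$ on $\HH$, and at an elliptic point $z_0$ of order $e$ the $\Gamma_0(N)$-invariance of $f(z)\,dz$ together with the holomorphy of $f$ forces the descended form to be holomorphic, which one checks directly in a local coordinate such as $w = \big(\tfrac{z - z_0}{z - \bar z_0}\big)^e$. At every cusp $\mathfrak{a} \neq \infty$, the hypothesis $f \in S_2^\sharp(N)$ says that the expansion of $f$, normalized by an appropriate scaling matrix, has only strictly positive powers of the local parameter $q_{\mathfrak{a}}$; since near $\mathfrak{a}$ the form $\omega_f$ is a nonzero scalar multiple of that expansion times $\frac{dq_{\mathfrak{a}}}{q_{\mathfrak{a}}}$, it is actually holomorphic (indeed vanishing) there. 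Hence the only possible pole of $\omega_f$ on $X_0(N)$ is at the cusp $\infty$.

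Next I would compute the residue at $\infty$. In the local parameter $q$ we have $\omega_f = \sum_n a_n q^{n - 1}\, dq$, so $\Res_{q = 0} \omega_f = a_0$. By the residue theorem on the compact Riemann surface $X_0(N)$, the sum of the residues of $\omega_f$ over all of its poles vanishes; since $\infty$ is the only pole, this yields $a_0 = 0$, which is exactly the assertion of the lemma.

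The main technical point is the local analysis at elliptic points (and, for general levels, a careful bookkeeping of the cusp normalizations and widths) needed to guarantee that $\omega_f$ genuinely extends to a meromorphic differential on $X_0(N)$ with polar locus contained in $\{\infty\}$; once that is in hand the residue theorem does the rest. A more hands-on alternative, avoiding explicit mention of $X_0(N)$, would integrate $f(z)\, dz$ around the boundary of a truncated fundamental domain for $\Gamma_0(N)$: the $\Gamma_0(N)$-invariance cancels the side identifications, the small arcs around elliptic points contribute nothing in the limit since $f$ is holomorphic there, the arcs at the cusps $\neq \infty$ contribute nothing since $f$ vanishes there, and the horizontal segment at $\infty$ contributes a nonzero multiple of $a_0$, so Cauchy's theorem forces $a_0 = 0$.
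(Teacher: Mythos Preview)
Your proposal is correct and is essentially the same argument as the paper's: interpret $f(z)\,dz$ as a meromorphic differential on the compact modular curve $X_0(N)$ and apply the residue theorem, noting that the only possible pole is at the cusp $\infty$ and that its residue is (up to a factor of $2\pi i$) the constant term of $f$. The paper phrases it as the slightly more general statement that for any $f\in M_2^!(N)$ the constant terms at all cusps sum to zero, and then specializes to $S_2^\sharp(N)$; your version goes directly to the special case and adds more detail about the local analysis at elliptic points and cusps, but the underlying idea is identical.
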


\begin{proof}
	First let $f \in M_2^!(N)$, and let $t$ be a cusp of $\Gamma_0(N)$. We write $f | \gamma_t (\infty) = a_t(0)$ where $\gamma_t \in \SL_2(\ZZ)$ is a matrix mapping $\infty$ to $t$. Although the Fourier expansion of $f$ at $t$ is not well-defined, its constant term $a_t(0)$ is. We claim that $\sum\limits_{t} a_t(0) = 0$, where the sum is over the cusps of $\Gamma_0(N)$. If so, the result is immediate, for if $f \in S_2^\sharp(N)$, then whenever $t \neq \infty$ we have $a_t(0) = 0$. Thus $0 = \sum\limits_t a_t(0) = a_\infty(0)$.

	By Theorem 3.7 of \cite{Miranda}, if $\omega$ is a differential 1-form on a compact Riemann surface $X$, then \[\sum\limits_{p \in X} \Res_p(\omega) = 0.\] Set $\omega = f(z)dz$ with $f \in M^!_2(N)$. For $t$ a cusp of $f$, $\Res_t(f) = \frac{a_t(0)}{2 \pi i}$. As $f$ is weakly holomorphic as a 1-form, we see \[\frac{1}{2 \pi i} \sum\limits_{t} a_t(0) = 0\] and the result follows.
\end{proof}

From the lemma, there is no $\gp N20 \in S_2^\sharp(N)$. But if $N$ is of genus $g_0 > 0$, then $\gp N2m$ exists for some $m > 0$ and for some $m < 0$, and hence there is a gap in the sequence $\{\gp N2m\}_m$ of canonical basis elements.

For $N \in \NN$ and $k$ an even integer, let $\miss[S](N, k)$ be the set of all integers $i$ such that there is no form $f \in S_k(N)$ with $v_\infty(f) = i$ but there do exist forms $g, h \in S_k(N)$ with $v_\infty(g) < i < v_\infty(h)$. Informally, $\miss[S](N, k)$ collects the indices that are skipped in the row-reduced basis of $S_k(N)$. We define $\miss[M](N, k),$ $\misss[S](N, k),$ $\misss[M](N, k)$ analogously. We also set $\cms[S](N, k) = \abs{\miss[S](N, k)}$, and define $\cms[M](N, k), \cmss[S](N,k), \cmss[M](N, k)$ analogously.

From now on, we restrict our attention to $N = p$ prime. If $p > 3$, we may define $\la(z) = \frac{\eta(pz)^{2p}}{\eta(z)^2}$. By Newman \cite{Newman1, Newman2} (see also \cite{Ligozat}), $\la$ is a weight $p-1$ modular form for $\Gamma_0(p)$ with all its zeros at $\infty$ and with $v_\infty (\la) = \frac{p^2-1}{12}$. Note that $f \mapsto \la^\ell f$ establishes a bijection from $M^\sharp_k(p)$ to $M^\sharp_{k + \ell (p-1)}(p)$ and from $S_k^\sharp(p)$ to $S_{k + \ell (p-1)}^\sharp(p)$. Thus the sets $\misss[S](p,2), \ldots \misss[S](p,p-1)$ control the behavior of $\misss[S] (p, k)$ and $\misss[M] (p, k)$ for general weight $k$.

The following results are classical (see Exercise 3.1.4 and Theorem 3.5.1 of \cite{Diamond}, for example), but essential for the computations that follow.

\begin{lemma}\label{genus}
	Let $p$ be prime. Then the genus of $\Gamma_0(p)$ is
	\begin{align*}
		g_0 = g_0(p) = \begin{cases}
			\floor{\frac{p+1}{12}} - 1 & \text{if } p \equiv 1 \pmod{12}, \\
			\floor{\frac{p+1}{12}} & \text{otherwise}.
		\end{cases}
	\end{align*}
\end{lemma}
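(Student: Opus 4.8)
The plan is to compute the genus of the modular curve $X_0(p)$ via the Riemann--Hurwitz formula applied to the natural covering map $\phi\colon X_0(p) \to X_0(1) = \PP^1$ of degree $[\SL_2(\ZZ) : \Gamma_0(p)] = p+1$. First I would record the standard formula (as in Theorem 3.1.1 of \cite{Diamond})
\[
g_0 = 1 + \frac{d}{12} - \frac{\varepsilon_2}{4} - \frac{\varepsilon_3}{3} - \frac{\varepsilon_\infty}{2},
\]
where $d = p+1$ is the index, $\varepsilon_2$ and $\varepsilon_3$ count the elliptic points of $X_0(p)$ of periods $2$ and $3$ respectively, and $\varepsilon_\infty$ is the number of cusps. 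So the work reduces to three counting problems.

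Next I would pin down the three counts for $N = p$ prime. The number of cusps of $\Gamma_0(p)$ is $\varepsilon_\infty = 2$ (namely $0$ and $\infty$), since for prime level the cusps are indexed by divisors of $p$. For the elliptic points, I would invoke the classical formulas: the number of elliptic points of period $2$ is $\varepsilon_2 = 1 + \legend{-1}{p}$ for $p$ odd, which equals $2$ when $p \equiv 1 \pmod 4$ and $0$ when $p \equiv 3 \pmod 4$; and the number of elliptic points of period $3$ is $\varepsilon_3 = 1 + \legend{-3}{p}$, which equals $2$ when $p \equiv 1 \pmod 3$ and $0$ when $p \equiv 2 \pmod 3$. (These come from counting solutions to $x^2 + 1 \equiv 0$ and $x^2 + x + 1 \equiv 0 \pmod p$, i.e.\ from whether the elliptic points $i$ and $\rho$ of $X_0(1)$ split or ramify in the cover; this is exactly Theorem 3.5.1 of \cite{Diamond}.)

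Then I would substitute into Riemann--Hurwitz:
\[
g_0 = 1 + \frac{p+1}{12} - \frac{1}{4}\Bigl(1 + \legend{-1}{p}\Bigr) - \frac{1}{3}\Bigl(1 + \legend{-3}{p}\Bigr) - 1 = \frac{p+1}{12} - \frac{1}{4}\legend{-1}{p} - \frac{1}{3}\legend{-3}{p} - \frac{7}{12},
\]
and the remaining task is the elementary but slightly tedious bookkeeping: one checks that the right-hand side equals $\floor{\frac{p+1}{12}}$ or $\floor{\frac{p+1}{12}} - 1$ by splitting into cases according to $p \bmod 12$ (equivalently the joint residue of $p$ mod $4$ and mod $3$). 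In each of the residue classes $p \equiv 1, 5, 7, 11 \pmod{12}$ the fractional part contributed by $\frac{p+1}{12}$ is cancelled exactly by the Legendre-symbol and constant terms, leaving an integer; comparing that integer with $\floor{\frac{p+1}{12}}$ shows it is the floor itself except when $p \equiv 1 \pmod{12}$, where the value drops by one. The main obstacle, such as it is, is purely organizational: making sure the four (or, counting mod $12$, up to six) cases are handled consistently and that the exceptional class $p \equiv 1 \pmod{12}$ is correctly identified as the one where both $\legend{-1}{p} = 1$ and $\legend{-3}{p} = 1$ force the extra subtraction. No deep input beyond the cited classical results is needed.
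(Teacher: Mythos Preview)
Your argument is correct and is precisely the standard derivation. The paper does not actually prove this lemma: it simply states it as classical and points to Exercise 3.1.4 and Theorem 3.5.1 of \cite{Diamond}, so your proposal supplies exactly the computation the paper omits. One small remark: your Legendre-symbol formulas for $\varepsilon_2$ and $\varepsilon_3$ tacitly assume $p>3$, so to match the lemma as stated you should dispose of $p=2$ and $p=3$ separately (both have genus $0$, and $\floor{3/12}=\floor{4/12}=0$, so the formula holds trivially).
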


Let $\mathcal{E}_{k}(\Gamma_0(N)) = \mathcal{E}_{k}(N)$ be the space of weight $k$ Eisenstein forms for $\Gamma_0(N)$.

\begin{lemma} \label{dimensions}
	Let $p > 3$ be prime. Then $\dim S_2(p) = g_0$, and for all $k > 2$, we have \begin{align*}
		\dim S_{k}(p) &=
		\begin{cases}
			g_0 k  - (g_0 + 1) + 2\floor{\frac{k}{3}} + 2\floor{\frac{k}{4}} & \text{if } p \equiv 1 \pmod{12} \\
			g_0 k - (g_0 + 1) & \text{if } p \equiv -1 \pmod{12} \\
			g_0 k - (g_0 + 1) + 2\floor{\frac{k}{4}} & \text{if }  p \equiv 5 \pmod{12} \\
			g_0 k - (g_0 + 1) + 2\floor{\frac{k}{3}} & \text{if } p \equiv -5 \pmod{12}
		\end{cases}
	\end{align*}
	
	Also, $\dim \mathcal{E}_2(p) = 1$ and $\dim \mathcal{E}_k(p) = 2$ for $k > 2$.
\end{lemma}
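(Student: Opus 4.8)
The statement is classical, and I would prove it exactly as the reference \cite{Diamond} suggests: reduce both dimension formulas to the geometric invariants of the modular curve $X_0(p)$, namely its genus $g_0$ (computed in Lemma \ref{genus}), its number $\varepsilon_\infty$ of cusps, and its numbers $\varepsilon_2, \varepsilon_3$ of elliptic points of orders $2$ and $3$. For $p$ prime, $\Gamma_0(p)$ has exactly the two cusps $0$ and $\infty$, so $\varepsilon_\infty = 2$. The classical counting formulas (see \cite{Diamond}, \S3.7) give, for $p > 3$,
\[\varepsilon_2 = 1 + \legend{-1}{p}, \qquad \varepsilon_3 = 1 + \legend{-3}{p},\]
where $\legend{\cdot}{p}$ is the Legendre symbol.

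Next I would evaluate these two symbols. By the first supplementary law, $\legend{-1}{p} = 1$ exactly when $p \equiv 1 \pmod 4$, and it is classical that $\legend{-3}{p} = 1$ exactly when $p \equiv 1 \pmod 3$. Sorting the residue of $p$ modulo $12$ accordingly gives $(\varepsilon_2, \varepsilon_3) = (2,2)$ when $p \equiv 1$, $(2,0)$ when $p \equiv 5$, $(0,2)$ when $p \equiv -5$, and $(0,0)$ when $p \equiv -1 \pmod{12}$. Now for even $k \geq 4$, the dimension formula of \cite{Diamond} (Theorem 3.5.1) reads
\[\dim S_k(\Gamma_0(p)) = (k-1)(g_0 - 1) + \big(\tfrac{k}{2} - 1\big)\varepsilon_\infty + \floor{\tfrac{k}{4}}\varepsilon_2 + \floor{\tfrac{k}{3}}\varepsilon_3,\]
where I have already used that $k$ is even to rewrite $\floor{\tfrac{k}{2}\big(1 - \tfrac12\big)}$ as $\floor{\tfrac{k}{4}}$ and $\floor{\tfrac{k}{2}\big(1-\tfrac13\big)}$ as $\floor{\tfrac{k}{3}}$. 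Since $\varepsilon_\infty = 2$, the first two summands collapse to $(k-1)(g_0-1) + (k-2) = g_0 k - (g_0+1)$, and substituting the four values of $(\varepsilon_2, \varepsilon_3)$ produces precisely the four cases in the statement.

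The weight $k = 2$ behaves differently and must be treated by hand: here $\dim S_2(\Gamma_0(p)) = g_0$, the classical identification of weight-$2$ cusp forms with holomorphic differentials on $X_0(p)$. For the Eisenstein spaces I would use $\dim \mathcal{E}_k(p) = \dim M_k(\Gamma_0(p)) - \dim S_k(\Gamma_0(p))$; the same theorem of \cite{Diamond} gives this difference as $\varepsilon_\infty = 2$ for even $k \geq 4$, and for $k = 2$ gives $\dim M_2(\Gamma_0(p)) = g_0 + \varepsilon_\infty - 1 = g_0 + 1$, so $\dim \mathcal{E}_2(p) = 1$. There is no genuine difficulty here beyond careful bookkeeping; the only subtle points are that the rewriting of the floor terms relies essentially on $k$ being even, and that $k = 2$ (as well as the excluded prime $p = 3$) is a true exception to the generic formulas.
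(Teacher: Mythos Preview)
Your proposal is correct and matches the paper's approach: the paper gives no proof at all beyond citing Exercise~3.1.4 and Theorem~3.5.1 of \cite{Diamond}, and what you have written is exactly the routine computation one carries out from those references, specializing the general dimension formula to $\Gamma_0(p)$ via the explicit values of $\varepsilon_\infty, \varepsilon_2, \varepsilon_3$. There is nothing to add.
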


We also require the following lemma, which Ogg attributes to Atkin in \cite{Ogg}.

\begin{lemma}\label{Ogg}
	For $p$ prime, $\infty$ is not a Weierstrass point of $X_0(p)$. Equivalently, $\miss[S] (p, 2) = \emptyset$ and $\cms[S] (p, 2) = 0$.
\end{lemma}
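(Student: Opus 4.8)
The plan is to split the statement into its two halves, which require very different treatment. The substantive assertion---that the cusp $\infty$ is not a Weierstrass point of $X_0(p)$ for $p$ prime---is classical, and I would simply invoke it from Ogg's paper \cite{Ogg}, where it is credited to Atkin. The remaining work is to check that this is equivalent to $\miss[S] (p, 2) = \emptyset$ (and hence $\cms[S] (p, 2) = 0$), which is a formal consequence of the definition of a Weierstrass point together with Lemmas \ref{genus} and \ref{dimensions}; no new ideas enter here.

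Here is how the equivalence goes. If $g_0 \le 1$ both statements are vacuous: a compact Riemann surface of genus $0$ or $1$ has no Weierstrass points at all, while $\dim S_2(\Gamma_0(p)) = g_0 \le 1$ by Lemma \ref{dimensions}, so $\miss[S] (p, 2) = \emptyset$ trivially. So assume $g_0 \ge 2$. The assignment $f \mapsto \omega_f := 2\pi i\, f(z)\, dz = f(z)\, \frac{dq}{q}$ is an isomorphism from $S_2(\Gamma_0(p))$ onto the space of holomorphic $1$-forms on $X_0(p)$, and $\ord_\infty(\omega_f) = v_\infty(f) - 1$. By Lemma \ref{dimensions} this space has dimension $g_0$, which by Lemma \ref{genus} is the genus of $X_0(p)$. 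Let $v_1 < v_2 < \cdots < v_{g_0}$ be the $v_\infty$-values occurring in an echelon-form basis of $S_2(\Gamma_0(p))$; since a curve of positive genus carries no function with a single simple pole, $1$ lies in the gap sequence of every point, forcing $v_1 = 1$. Through the isomorphism, $v_1 - 1 < \cdots < v_{g_0} - 1$ are the vanishing orders at $\infty$ of an echelon basis of holomorphic $1$-forms, so the Weierstrass gap sequence at $\infty$ is exactly $\{v_1, \ldots, v_{g_0}\}$. By definition $\infty$ fails to be a Weierstrass point precisely when this set equals $\{1, 2, \ldots, g_0\}$, i.e.\ when $v_i = i$ for all $i$; and $\miss[S] (p, 2) = \emptyset$ says exactly that $\{v_1, \ldots, v_{g_0}\}$ is an interval of $g_0$ consecutive integers, which (as $v_1 = 1$) is the same thing. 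When this holds, $\cms[S] (p, 2) = \abs{\miss[S] (p, 2)} = 0$.

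The only genuinely non-formal ingredient is Atkin's theorem, and I expect that to be the sole obstacle. It is not a soft statement---the analogous claim fails for general level, since certain cusps of $X_0(N)$ really are Weierstrass points---so any proof must use features special to prime level, in practice the interchange of the cusps $0$ and $\infty$ by the Fricke involution $w_p$ and the Hecke structure on $S_2(\Gamma_0(p)) = S_2^{\mathrm{new}}(\Gamma_0(p))$, as in \cite{Ogg}. Were one to demand a self-contained proof, the heart of the matter would be to show that no nonzero $f \in S_2(\Gamma_0(p))$ vanishes to order exceeding $g_0$ at $\infty$---equivalently, that the first $g_0$ Fourier coefficients separate the forms in $S_2(\Gamma_0(p))$---and I would not expect to get around Atkin's argument for this.
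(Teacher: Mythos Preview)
Your proposal is correct and matches the paper's treatment: the paper does not prove this lemma at all but simply cites \cite{Ogg}, stating that Ogg attributes the result to Atkin. Your additional verification of the equivalence between ``$\infty$ is not a Weierstrass point'' and $\miss[S](p,2)=\emptyset$ is sound and spells out what the paper leaves implicit in the word ``Equivalently.''
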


For $p$ prime and $k$ an even natural number, let \begin{align*}
		s_{p, k} &= \max\limits_{0 \neq f \in S_k(N) } v_\infty(f), \\
		m_{p, k} &= \max\limits_{0 \neq f \in M_k(N) } v_\infty(f). \end{align*} Note that $E_{k,p}(z) = \frac{p E_k(pz) - E_k(z)}{p - 1} = 1 + \ldots \in M_k(p)$, so $m \in \miss[M](p, k)$ if and only if $0 < m < m_{p,k}$ and there are no forms in $M_k(p)$ with leading term $q^m$. The number of such forms in the reduced row echelon basis is $\cms[M](p, k) = m_{p, k} - \dim M_k(p)$. Similarly, the existence of eigenforms (which must be $O(q)$ but not $O(q^2)$) shows that $m \in \miss[S](p, k)$ if and only if $1 < m < s_{p, k}$ and there are no forms in $S_k(p)$ with leading term $q^m$, and the number of such forms is $\cms[S](p, k) = s_{p, k} - \dim S_k(p)$. Thus for all $p$ prime, $k > 0$ even, we have \begin{align*}
	\cms[M] (p, k) &= m_{p, k} - \dim M_k(p), \\
	\cms[S] (p, k) &= s_{p, k} - \dim S_k(p).
\end{align*}

A theorem of Ahlgren, Masri, and Rouse allows us to control $\cms[S] (p, k)$ for $2 \leq k \leq p - 1$.

\begin{thm}[\cite{Ahlgren}, Theorem 1.4]\label{AhlgrensTheorem}
	Suppose that $k \geq 2$, and that $p \geq \max \{5, k+1\}$ is prime. Then we have \[s_{p,k} \leq \frac{k p}{12} - \frac{1}{2} \alpha_2(1,k p) - \frac{1}{3} \alpha_3(1,k p)\] where $\alpha_2(1, k p)$ and $\alpha_3(1, k p)$ are the orders of vanishing for $i$ and $e^{2 \pi i /3}$ forced by valence considerations in $M_{k p}(1)$.
\end{thm}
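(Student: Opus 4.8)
The plan is to reduce the bound to a valence computation in level $1$. Suppose $f \in S_k(\Gamma_0(p))$ is nonzero with $v_\infty(f) = s_{p,k}$. It suffices to produce from $f$ a nonzero holomorphic modular form $G \in M_{kp}(\SL_2(\ZZ))$ with $v_\infty(G) \ge s_{p,k}$: granting this, the valence formula in level $1$ gives
\[
  s_{p,k} \le v_\infty(G) \le \frac{kp}{12} - \frac12 v_i(G) - \frac13 v_\rho(G) \le \frac{kp}{12} - \frac12 \alpha_2(1,kp) - \frac13 \alpha_3(1,kp),
\]
where $\rho = e^{2 \pi i /3}$, the middle inequality discards the nonnegative contributions from the remaining points, and the last inequality uses that every nonzero form in $M_{kp}(\SL_2(\ZZ))$ is divisible by $E_6$ when $kp \equiv 2 \pmod 4$ and by $E_4$ (resp.\ $E_4^2$) when $kp \equiv 1$ (resp.\ $2$) $\pmod 3$, so that $v_i(G) \ge \alpha_2(1,kp)$ and $v_\rho(G) \ge \alpha_3(1,kp)$ automatically. (One checks directly that $\frac{kp}{12} - \frac12 \alpha_2(1,kp) - \frac13 \alpha_3(1,kp) = \dim M_{kp}(\SL_2(\ZZ)) - 1$, the largest vanishing order at $\infty$ realized in weight $kp$ and level $1$, so this is the sharp form of the valence bound.)

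The heart of the argument is the construction of $G$. I would begin with the $p+1$ conjugates $f|_k \gamma$, where $\gamma$ runs over coset representatives for $\Gamma_0(p) \backslash \SL_2(\ZZ)$; the elementary symmetric functions in these conjugates are holomorphic modular forms for $\SL_2(\ZZ)$, and since $f$ is a cusp form all $p+1$ conjugates vanish at $\infty$, so the norm $N(f) = \prod_\gamma (f|_k \gamma)$ is a nonzero element of $S_{k(p+1)}(\SL_2(\ZZ))$. The coset representative fixing $\infty$ contributes the factor $f$, with $v_\infty = s_{p,k}$; the other $p$ representatives are attached to the cusp $0$, which has width $p$, and together they contribute $v_\infty(f|_k W_p) = v_0(f) \ge 1$, so $v_\infty(N(f)) = s_{p,k} + v_0(f)$. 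The nontrivial step is to trade the superfluous factor of $k$ in the weight for extra vanishing at $\infty$ and thereby land in weight exactly $kp$. The clean way to do this is to work on the covering $\pi \colon X_0(p) \to X(1) = \PP^1$: weight-$k$ forms on $\Gamma_0(p)$ are the global sections of $\pi_* \omega^{\otimes k}$, where $\omega$ is the Hodge bundle, this pushforward splits on $\PP^1$ into a sum of line bundles, and $\pi$ is unramified at the cusp $\infty$ (which has width $1$); projecting $f$, regarded as a section vanishing to order $s_{p,k}$ above $j = \infty$, onto the summand of largest degree and re-reading it as a level-$1$ form should yield the desired $G$ with $v_\infty(G) \ge s_{p,k}$. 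The distinguished form $\la = \eta(pz)^{2p}/\eta(z)^2 \in M_{p-1}(\Gamma_0(p))$, whose entire divisor sits at $\infty$ with $v_\infty(\la) = \tfrac{p^2-1}{12}$, is the natural tool for making this weight-stripping explicit, and the hypothesis $p \ge \max\{5,k+1\}$ is what makes the resulting degree estimate point in the right direction.

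I expect this weight-reduction to be the only genuine obstacle: it is precisely where one must use the geometry of $X_0(p)$ — the two cusps and their widths, the elliptic points, and the splitting type of $\pi_* \omega^{\otimes k}$. A possibly cleaner alternative is to bypass $G$ and run Riemann--Roch directly on $X_0(p)$: realize $s_{p,k}$ as the largest $s$ for which $\omega^{\otimes k}(-s[\infty] - [0])$ has a nonzero section, and use Serre duality together with $\omega^{\otimes 2} \cong \Omega^1_{X_0(p)}(\text{cusps})$ to identify the obstruction space with weakly holomorphic forms of weight $2 - k$ having a pole of bounded order at $\infty$; bounding the number of such forms via the level-$1$ valence formula again closes the argument, with the bonus of exposing the same weight $k \leftrightarrow 2-k$ duality that drives the rest of this paper. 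Once $G$ — equivalently, the dimension count — is available, everything else is routine.
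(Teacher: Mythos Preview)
The paper does not prove this statement: Theorem~\ref{AhlgrensTheorem} is quoted from \cite{Ahlgren} and used as a black box, so there is no in-paper argument to compare your proposal against.

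Judged on its own, your overall strategy---manufacture a nonzero $G \in M_{kp}(\SL_2(\ZZ))$ with $v_\infty(G) \ge s_{p,k}$ and invoke the level-$1$ valence formula---is the right shape, and your reduction of the valence inequality to the existence of such a $G$ is correct. The decisive step, however, is not carried out. Your norm $N(f)$ lands in weight $k(p+1)$, and the bound it yields, $s_{p,k} + v_0(f) \le \dim M_{k(p+1)}(\SL_2(\ZZ)) - 1$, misses the target by roughly $k/12$. You acknowledge this and propose to repair it by ``projecting onto the summand of largest degree'' in a splitting of $\pi_* \omega^{\otimes k}$, but you do not identify that summand, argue that the projection of $f$ is nonzero, or verify that the projection preserves the vanishing order over $j=\infty$. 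That identification \emph{is} the theorem: once one knows which level-$1$ object of weight $kp$ to write down and why it inherits $v_\infty(f)$, everything else is bookkeeping. The Riemann--Roch alternative has the same gap in different clothing: after Serre duality you must bound $\dim M_{2-k}^\sharp(p)$ with a prescribed pole order at $\infty$, and ``bounding the number of such forms via the level-$1$ valence formula'' presupposes exactly the comparison between level $p$ and level $1$ that you are trying to establish. You have correctly located the single nontrivial step and then not done it; what remains is a plausible outline rather than a proof.
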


Theorem 1.4 of \cite{Ahlgren} is more general than the above claim, but this formulation is sufficient for our needs. A computation shows that for $p > 3$ prime, $2 \leq k \leq p - 1$ even, we have $\cms[S] (p, k) \leq \floor{\frac{p-k}{12}} + 1 + \epsilon_{p,k}$ where
	\[
	\epsilon_{p,k} =
	\begin{cases}
		1 & p - k + 1 \equiv 0 \pmod{12}, \\
		-1 & p - k - 1 \equiv 0 \pmod{12}, \\
		0 & \text{otherwise.}
	\end{cases}
	\] In particular, letting $k = p - 1$, we see that for $p > 3$ prime, $p \not\equiv 1 \pmod{12}$, we have $\cms (p, p-1) = 0$. In other words, there are no gaps in the basis for $S_{p - 1}(p)$.	If $p \equiv 1 \pmod{12}$, we only have $\cms (p, p-1) \leq 1$; this bound is not tight enough to prove our results.

\section{Weights 0 and 2}\label{section02}

The results in the previous section allow us to show that for $p \not\equiv 1 \pmod{12}$ a prime of nonzero genus, there are no unexpected gaps in the bases $\{ \fp pkm \}_m$ and $\{ \gp pkm \}_m$ in weights $k \equiv 0, 2 \pmod{p - 1}$.

\begin{prop}\label{0dim}
	Define $\lambda_p = \frac{p^2 -1}{12} = v_\infty(\la)$. Let $p \not\equiv 1 \pmod{12}$ be prime of genus $g_0 > 0$. Then $\fp p0m$ is defined for $m = 0$ and for \[m \geq \lambda_p - s_{p,p-1} = \lambda_p - \dim S_{p-1}(p) = g_0(p) + 1\] and is of the form \[\fp p0m = q^{-m} + a_0^{(p)}(m,-g_0(p)) q^{-g_0(p)} + \ldots + a_0^{(p)}(m,-1) q^{-1} + a_0^{(p)}(m,1) q + \ldots \text{ .}\] Likewise, $\gp p0m$ is defined for $m \geq g_0(p) + 1$ and is of the form \[\gp p0m = q^{-m} + b_0^{(p)}(m, -g_0(p)) q^{-g_0(p)} + \ldots \text{ .}\]
\end{prop}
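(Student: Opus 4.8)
The plan is to locate the first index at which the canonical bases of $M_0^\sharp(p)$ and $S_0^\sharp(p)$ can possibly begin, using the multiplication-by-$\la$ bijection to transport the gap structure of weight $0$ into weight $p-1$, and then to pin down the exact leading index by combining the dimension count for $S_{p-1}(p)$ with the vanishing of gaps established at the end of Section~\ref{Preliminaries}. First I would observe that $\fp p0m$ exists exactly when there is a weakly holomorphic modular function of level $p$, holomorphic away from $\infty$, with a pole of order exactly $m$ at $\infty$. Since $\la$ is a weight $p-1$ form with all its zeros at $\infty$ and $v_\infty(\la) = \lambda_p = \frac{p^2-1}{12}$, the map $f \mapsto \la f$ is a bijection $M_0^\sharp(p) \to M_{p-1}^\sharp(p)$ that shifts $v_\infty$ by $+\lambda_p$. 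Thus $\fp p0m$ exists (for $m \geq 1$, i.e.\ with a genuine pole) precisely when there is a holomorphic form in $M_{p-1}(p)$—indeed one can arrange it to lie in $S_{p-1}(p)$—with order of vanishing $\lambda_p - m$ at $\infty$; and the largest possible such order is $s_{p,p-1} = \max_{0 \neq f \in S_{p-1}(p)} v_\infty(f)$. This already gives that $\fp p0m$ (and $\gp p0m$) is undefined for $1 \leq m < \lambda_p - s_{p,p-1}$, and defined for $m = \lambda_p - s_{p,p-1}$.

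Next I would carry out the arithmetic that identifies $\lambda_p - s_{p,p-1}$ with $g_0(p) + 1$. Since $p \not\equiv 1 \pmod{12}$, the computation following Theorem~\ref{AhlgrensTheorem} gives $\cms(p, p-1) = 0$, so there are no gaps in the basis of $S_{p-1}(p)$ and hence $s_{p,p-1} = \dim S_{p-1}(p)$, where I should be careful that $s_{p,p-1}$ counts the top index while $\dim S_{p-1}(p)$ counts basis elements, and the absence of gaps (together with the fact that every nonzero cusp form is $O(q)$ but not $O(q^2)$, as noted in the preliminaries) forces equality. Setting $k = p-1$ in Lemma~\ref{dimensions} and plugging into $\lambda_p - \dim S_{p-1}(p)$, I would check case by case on $p \bmod 12 \in \{5, 7, 11\}$ (using also $g_0 = \floor{\frac{p+1}{12}}$ from Lemma~\ref{genus}) that $\frac{p^2-1}{12} - \dim S_{p-1}(p) = g_0(p) + 1$. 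The $\floor{\cdot}$ terms in $\dim S_{p-1}(p)$ and the identity $\frac{p^2-1}{12} = \frac{(p-1)(p+1)}{12}$ should conspire to collapse the expression; this is the routine but slightly fiddly step.

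For the shape of the Fourier expansions, I would argue as follows. The number of gaps in $\{\fp p0m\}_m$ above the first element is $\cmss[M](p, 0)$, which under the $\la$-bijection equals $\cms[M](p, p-1)$ restricted to indices below $m_{p,p-1}$; since $S_{p-1}(p)$ has no gaps and $\dim M_{p-1}(p) - \dim S_{p-1}(p) = \dim \mathcal{E}_{p-1}(p) = 2$, one gets that $\fp p0m$ is defined for all $m \geq g_0 + 1$ with no further gaps. Then for $m \geq g_0+1$, row reduction of the basis makes $\fp p0m = q^{-m} + \ldots$ have no terms $q^{-j}$ with $j > g_0$ (those correspond to earlier, nonexistent basis elements) and no constant term $q^0$ when $m > 0$—the constant term being controlled by $\fp p00 = 1$ in the reduced row echelon form, exactly as in the $M_k^!(1)$ case—so the expansion has the claimed form, with possible terms $q^{-g_0}, \ldots, q^{-1}, q, q^2, \ldots$. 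For $\gp p0m$, since $S_0^\sharp(p) \subseteq M_0^\sharp(p)$ and the forms must vanish at every cusp other than $\infty$, the same index range $m \geq g_0 + 1$ applies and the leading behavior $q^{-m} + b_0^{(p)}(m, -g_0) q^{-g_0} + \ldots$ follows identically.

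The main obstacle I anticipate is not conceptual but bookkeeping: correctly translating between the three-way data $\{$ order of top/first element, dimension, number of gaps $\}$ under the weight shift by $\la$, and making sure the edge cases $m = 0$ versus $m \geq g_0+1$ (and the vanishing of the constant term, which uses Lemma~\ref{zero} in the $S_2$ analogue but here is just a row-reduction statement) are handled without an off-by-one error. The key input that makes everything tight is precisely that $\cms(p, p-1) = 0$ for $p \not\equiv 1 \pmod{12}$, which is why the hypothesis $p \not\equiv 1 \pmod{12}$ cannot be dropped.
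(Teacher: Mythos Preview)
Your approach matches the paper's: transport the weight-$0$ question to weight $p-1$ via multiplication by $\la$, and use $\cms(p,p-1)=0$ (for $p\not\equiv 1\pmod{12}$) to identify the first pole order as $g_0+1$. However, there is a gap in the range you actually cover. Your gap-counting argument (``$\cmss[M](p,0)$ equals $\cms[M](p,p-1)$ restricted to indices below $m_{p,p-1}$'') only controls the \emph{holomorphic} space $M_{p-1}(p)$, which via the bijection produces $\fp p0m$ for $g_0+1\leq m\leq \lambda_p$. For $m>\lambda_p$ the image $\la\,\fp p0m$ would have a pole at $\infty$, hence lies in $M_{p-1}^\sharp(p)\setminus M_{p-1}(p)$, and nothing you wrote shows such forms exist. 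The paper fills this with an explicit inductive step: once $\fp p0m$ is in hand for $g_0+1\leq m\leq n$ with $n\geq\lambda_p$, the product $\fp p0{n-g_0}\cdot\fp p0{g_0+1}=q^{-n-1}+\ldots$ lies in $M_0^\sharp(p)$ and row-reduces to $\fp p0{n+1}$. This is easy, but it is an argument, not a consequence of counting gaps in holomorphic forms.

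One small slip in the Fourier-shape paragraph: the terms $q^{-j}$ with $g_0<j<m$ are killed by row reduction precisely because $\fp p0j$ \emph{exists} for those $j$, not because those basis elements are ``nonexistent'' as you wrote; the surviving coefficients at $q^{-1},\ldots,q^{-g_0}$ are exactly those for which no basis element is available to subtract.
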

We remark that $a_0^{(p)}(m,n) = b_0^{(p)}(m,n)$ except when $n = 0$.
\begin{proof}
	Let $\{ \gp p{p-1}m = q^{-m} + \ldots \}_{m = -\dim S_{p-1}(p)}^{-1}$ be a row-reduced basis for $S_{p-1}(p)$; the basis has this form by the argument above. The Eisenstein series $E_{p - 1}$ is an oldform of $M_{p-1}(p)$ and has constant term, unlike all the $\gp p{p-1}i$ previously described, and $\la$ is a weight $p-1$ form with higher order of vanishing than any of the forms in $S_{p-1}(p)$. Then as $\dim M_k(p) = \dim S_k(p) + 2$, these forms together form a basis for $M_{p-1}(p)$. Start with $\la$, the form of maximal order of vanishing in this basis, and begin dividing by $\la$. Note that $\la / \la = 1$ is in reduced form, and no nonzero element of $M_0^\sharp(p)$ has positive order of vanishing at infinity, else multiplying by $\la$ we would have a form in $M_{p-1}(p)$ with order of vanishing at infinity greater than $\la$, which is impossible. Now consider $\gp p{p-1}{s_{p,p-1}}$, the next form in our basis for $M_k(p)$, and divide by $\la$. This yields a nonconstant form in $S_0^\sharp(p) \subseteq M_0^\sharp(p)$ with a pole at infinity of minimal order. In particular, $v_\infty(\frac{\gp p{p-1}{s_{p,p-1}}}{\la}) = - g_0(p) - 1$. Subtracting off an appropriate multiple of $1$ from this form, we have obtained $\fp p0{g_0(p) + 1}$. Continuing in this manner and row-reducing by previously constructed forms, we obtain $1, \fp p0{g_0(p) + 1}, \ldots, \fp p0{\lambda_p}$. Now suppose we have forms $\fp p0m$ for any $g_0(p) + 1 \leq m \leq n$ with $n \geq \lambda_p$. Then (noting that $g_0(p) + 1 \leq n - g_0(p) \leq n$) the form $\fp p0{n - g_0(p)} \fp p0{g_0(p) + 1} = q^{-n - 1} + \ldots$ is in $M_0^\sharp(p)$, and row-reducing with previously constructed forms, we obtain $\fp p0{n+1}$. Continuing inductively, we obtain exactly the $\fp p0m$ claimed. The construction of $\gp p0m$ is analogous.
\end{proof}

The corollary below is immediate by writing $M^\sharp_{\ell (p -1)}(p) = \la^\ell M^\sharp_0(p)$.

\begin{cor}\label{dim0}
	Let $p \not\equiv 1 \pmod{12}$ be prime of genus $g_0 > 0$. Let $k = \ell (p-1)$, with $\ell \in \ZZ$. The basis elements $\fp p{\ell (p-1)}m$ exist for $m = - \ell \lambda_p$ and for $m \geq g_0(p) - \ell \lambda_p + 1$ and are of the form \begin{align*}
		\fp p{\ell (p-1)}m = q^{-m} &+ a_{\ell (p-1)}^{(p)}(m, -g_0(p) + \ell \lambda_p) q^{-g_0(p) + \ell \lambda_p} + \ldots \\
		&+ a_{\ell (p-1)}^{(p)}(m, \ell \lambda_p - 1) q^{\ell \lambda_p - 1} + a_{\ell (p-1)}^{(p)}(m,\ell \lambda_p + 1) q^{\ell \lambda_p + 1} + \ldots \text{ .}
	\end{align*}
	
	Likewise, $\gp p{\ell (p-1)}m$ is defined for $m \geq g_0(p) - \ell \lambda_p + 1$ and is of the form \[\gp p{\ell (p-1)}m = q^{-m} + b_0^{(p)}(m, - g_0(p) + \ell \lambda_p) q^{-g_0(p) + \ell \lambda_p} + \ldots \text{ .}\]
	\end{cor}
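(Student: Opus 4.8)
The plan is to transport Proposition \ref{0dim} across the isomorphism $f \mapsto \la^\ell f$. As noted in Section \ref{Preliminaries}, $\la$ is holomorphic and nonvanishing on $\HH$ and at every cusp of $\Gamma_0(p)$ other than $\infty$, and has a zero of order exactly $\lambda_p = v_\infty(\la)$ at $\infty$; consequently $\la^\ell \in M_{\ell(p-1)}^\sharp(p)$ for every $\ell \in \ZZ$ (with a pole only at $\infty$ when $\ell < 0$), multiplication by $\la^\ell$ is a $\CC$-linear bijection from $M_0^\sharp(p)$ onto $M_{\ell(p-1)}^\sharp(p)$ that also carries $S_0^\sharp(p)$ onto $S_{\ell(p-1)}^\sharp(p)$, and---since $\la$ has leading Fourier coefficient $1$---this bijection sends a form of order $v$ at $\infty$ to a form of order $v + \ell\lambda_p$.

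I would then proceed in two short steps. First, by Proposition \ref{0dim} the orders of vanishing at $\infty$ realized by nonzero forms in $M_0^\sharp(p)$ are exactly $\{0\} \cup \{-n : n \geq g_0 + 1\}$, and those realized in $S_0^\sharp(p)$ are exactly $\{-n : n \geq g_0 + 1\}$; shifting by $\ell\lambda_p$ then shows that the orders realized in $M_{\ell(p-1)}^\sharp(p)$ are $\{\ell\lambda_p\} \cup \{\ell\lambda_p - n : n \geq g_0 + 1\}$ and those realized in $S_{\ell(p-1)}^\sharp(p)$ are $\{\ell\lambda_p - n : n \geq g_0 + 1\}$, which is precisely the claimed list of indices $m$ for which $\fp p{\ell(p-1)}m$, resp.\ $\gp p{\ell(p-1)}m$, exists. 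Second, since a reduced row echelon basis is determined by its space, $\fp p{\ell(p-1)}m$ equals $q^{-m}$ plus a sum of terms $a_{\ell(p-1)}^{(p)}(m,n) q^n$ over those $n > -m$ for which $-n$ is not a realized order of vanishing; feeding in the list just obtained, the coefficients of $q^n$ for $-m < n < \ell\lambda_p - g_0$ all vanish, the coefficient of $q^{\ell\lambda_p}$ vanishes (that order being realized, with $\fp p{\ell(p-1)}{-\ell\lambda_p} = \la^\ell$), and the remaining low-order coefficients are unconstrained, which is exactly the stated expansion. For $\gp p{\ell(p-1)}m$ the order $\ell\lambda_p$ is no longer realized, so the $q^{\ell\lambda_p}$ coefficient is no longer forced to vanish, and we obtain the stated expansion for it as well.

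I do not expect a genuine obstacle here; as already remarked, the statement is immediate once Proposition \ref{0dim} is in hand. The only points requiring a moment's care are verifying that $\la^{-1}$ is a legitimate weakly holomorphic form of weight $-(p-1)$ with its only pole at $\infty$, so that the isomorphism is available for $\ell < 0$ as well as $\ell \geq 0$, and the purely combinatorial translation between the set of realized orders of vanishing and the zero pattern of the reduced basis elements.
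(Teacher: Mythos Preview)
Your proposal is correct and is exactly the approach the paper takes: the paper simply remarks that the corollary is immediate from $M^\sharp_{\ell(p-1)}(p) = \la^\ell M^\sharp_0(p)$, and you have spelled out precisely this transport of Proposition~\ref{0dim} along the bijection $f \mapsto \la^\ell f$, together with the bookkeeping that translates realized orders of vanishing into the zero pattern of the reduced basis elements.
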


We obtain similar results for weight $2$.

\begin{prop}
	Let $p \not\equiv 1 \pmod{12}$ be prime of genus $g_0 > 0$. Then $\fp p2m$ is defined for $m \geq - g_0(p)$ and is of the form \[\fp p2m = q^{-m} + a_2^{(p)}(m, g_0(p)+1) q^{g_0(p)+1} + \ldots \\ \text{ .}\] Likewise, $\gp p2m$ is defined for $m \geq - g_0(p)$, $m \neq 0$ and is of the form \[\gp p2m = q^{-m} + b_2^{(p)}(m, g_0(p)+1) q^{g_0(p) + 1} + \ldots \\ \text{ .}\]
\end{prop}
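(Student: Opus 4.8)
The plan is to mimic the proof of Proposition \ref{0dim}, transporting structural information from a cusp form space of weight $p+1$ down to weight $2$ via division by $\la$. The key observation is that multiplication by $\la$ carries $M_2^\sharp(p)$ isomorphically onto $M_{p+1}^\sharp(p)$ and $S_2^\sharp(p)$ onto $S_{p+1}^\sharp(p)$, shifting orders of vanishing at $\infty$ by $\lambda_p = \frac{p^2-1}{12}$. So I first need to understand the reduced row echelon bases of $S_{p+1}(p)$ and $M_{p+1}(p)$, and in particular the gap set $\miss[S](p, p+1)$ and the maximal vanishing order $s_{p, p+1}$. Using the bound on $\cms[S](p,k)$ derived from Theorem \ref{AhlgrensTheorem} with $k = p+1$ (or, for the very smallest primes, a direct check), together with $p \not\equiv 1 \pmod{12}$, I expect to conclude $\cms[S](p, p+1) = 0$, i.e. there are no gaps in the basis for $S_{p+1}(p)$, and that $s_{p,p+1} = \dim S_{p+1}(p)$.

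With that in hand, I would build a basis for $M_{p+1}(p)$ out of $\la \cdot E_{2,p}$ (an Eisenstein-type form of weight $p+1$ with a nonzero constant term, of vanishing order $\lambda_p$), the form $\la^2 / \la = \la$ wait --- more carefully: the form $\la E_{2,p}$ and a second Eisenstein form, together with the gap-free reduced basis $\gp p{p+1}m$ of $S_{p+1}(p)$ for $1 \leq m \leq s_{p,p+1}$. Dividing each of these by $\la$ produces forms in $M_2^\sharp(p)$: the Eisenstein piece yields $\fp p2{-\lambda_p + \lambda_p} = \fp p2{0}$-type behavior, and $\gp p{p+1}{m}/\la$ has vanishing order $m - \lambda_p$, which ranges over all integers from $1 - \lambda_p$ up to $s_{p,p+1} - \lambda_p$. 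Since $s_{p,p+1} - \lambda_p = \dim S_{p+1}(p) - \lambda_p$, a dimension count (using Lemma \ref{dimensions} and Lemma \ref{genus}) should give exactly $s_{p,p+1} - \lambda_p = g_0(p)$, so these quotients realize $\fp p2{m}$ for all $-g_0(p) \le m \le -1$; after row-reduction against the constructed forms we also get $\fp p2{m}$ for $m \ge 1$. The index $m = 0$ must be excluded for $\gp p2m$ by Lemma \ref{zero}, but is present for $\fp p2m$ via (a row-reduction of) $E_{2,p}/1$, i.e. $E_{2,p}$ itself. To continue to all $m \geq -g_0(p)$, I would multiply previously constructed basis elements, e.g. $\fp p2{m} \cdot \fp p0{j}$ with $j$ chosen so the product lies in $M_2^\sharp(p)$ with the desired pole order, using Proposition \ref{0dim}/Corollary \ref{dim0} for the weight-$0$ factors, and then row-reduce; induction then fills in every index. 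The shape of the Fourier expansion --- pole $q^{-m}$ followed immediately by a term of order $q^{g_0(p)+1}$ --- falls out because the reduced echelon condition kills all the intermediate coefficients $q^{-g_0(p)}, \dots, q^{0}, \dots, q^{g_0(p)}$: these are precisely the exponents at which no lower-index basis element has its leading term, except... here I must be careful, since for $\fp p2m$ the relevant constraint is that coefficients in positions occupied as leading terms of smaller-index basis elements vanish, and the leading terms are $q^{-m'}$ for $m' \geq -g_0(p)$ together with... the structure of which positions are "free" comes from the gap set, so the first possible nonzero non-principal term sits at $q^{g_0(p)+1}$.

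The main obstacle I anticipate is establishing $\cms[S](p,p+1) = 0$ cleanly. The bound from Theorem \ref{AhlgrensTheorem} requires $p \geq k+1 = p+2$, which \emph{fails}, so I cannot apply the earlier estimate on $\cms[S](p,k)$ directly with $k = p+1$. Instead I would use the isomorphism $S_{p+1}^\sharp(p) \cong \la \cdot S_2^\sharp(p)$ together with Lemma \ref{Ogg} ($\miss[S](p,2) = \emptyset$): since $\la$ has all its zeros at $\infty$, multiplication by $\la$ is an order-preserving (up to the shift $\lambda_p$) bijection on the holomorphic-away-from-$\infty$ subspaces, but to pass between $S_{p+1}(p)$ (holomorphic everywhere) and $S_2(p)$ I need that $\la$ is holomorphic and nonvanishing away from $\infty$, which it is. Thus $\dim S_{p+1}(p)$, $s_{p,p+1}$, and the gap structure are all governed by those of $S_2^\sharp(p)$ near its "bottom," and Lemma \ref{Ogg} guarantees no gaps there; combined with Lemma \ref{dimensions} this pins down $s_{p,p+1} = \dim S_{p+1}(p) = \lambda_p + g_0(p)$ wait --- I should double-check: $s_{p,p+1} = s_{p,2} + \lambda_p$ is false since weight $2$ cusp forms are bounded; rather $s_{p,p+1}$ equals the top of the weight-$p+1$ cusp space, and the no-gap claim is what matters, flowing from the fact that $S_{p+1}(p)/\la \hookrightarrow M_2^\sharp(p)$ hits a contiguous block of pole orders. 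Once the no-gap fact is secured, the rest is the same division-and-row-reduction bookkeeping as in Proposition \ref{0dim}, and the parallel statement for $\gp p2m$ follows identically, with the single index $m=0$ omitted by Lemma \ref{zero}.
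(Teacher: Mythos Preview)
Your detour through weight $p+1$ is both unnecessary and has a genuine gap. The paper works directly in weight $2$: Lemma~\ref{Ogg} says exactly that $\miss[S](p,2) = \emptyset$, and since $\dim S_2(p) = g_0$ by Lemma~\ref{dimensions}, the reduced echelon basis of $S_2(p)$ already consists of forms with $v_\infty = 1, 2, \ldots, g_0$. These \emph{are} the elements $\fp p2m$ for $-g_0 \leq m \leq -1$. Adjoining $E_{2,p}$ gives $\fp p20$, and from there your inductive step---multiply by $\fp p0{g_0+1}$ and row-reduce---is precisely the paper's argument. For the $\gp p2m$ side one proceeds identically, omitting $m=0$ by Lemma~\ref{zero} and manufacturing $\gp p2{g_0+1}$ as (a row-reduction of) $\gp p2{-1} \cdot \fp p0{g_0+2}$.

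The gap in your route is the claim $\cms[S](p,p+1) = 0$. You correctly observe that Theorem~\ref{AhlgrensTheorem} requires $p \ge k+1$ and so does not apply at $k=p+1$. Your fallback---transporting the gap structure of $S_2^\sharp(p)$ to $S_{p+1}(p)$ through multiplication by $\la$---is circular: the structure of $S_2^\sharp(p)$ outside the holomorphic range $S_2(p)$ is exactly what the proposition asserts. Lemma~\ref{Ogg} controls only $S_2(p)$, i.e.\ vanishing orders $1,\ldots,g_0$; under multiplication by $\la$ this says something about $S_{p+1}(p)$ only in the narrow window $v_\infty \in \{\lambda_p+1,\ldots,\lambda_p+g_0\}$, not about the bulk of $S_{p+1}(p)$ with $v_\infty \le \lambda_p$. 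So the no-gap claim for weight $p+1$ is never secured, and nothing downstream of it follows. The remedy is simply to drop the detour: Lemma~\ref{Ogg} already hands you the base case in weight $2$.
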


We note that $\fp p20 - E_{2, p}$ is a cusp form, accounting for the $m=0$ exception in the basis $\{\gp p2m\}$.

\begin{proof}
	Lemma \ref{Ogg} gives us $\fp p2m$ for $-g_0(p) \leq m \leq -1$. As $M_2(p)$ has an Eisenstein form with constant term,  namely $E_{2,p}(z)$, we also have $\fp p20 (z)$. Now suppose we have $\fp p2m$ whenever $-g_0(p) \leq m \leq n$ with $n \geq 0$. Then $n - g_0(p) \geq - g_0(p)$ and $\fp p0{g_0(p) + 1} \fp p2{n - g_0(p)} = q^{-n-1} + \ldots$ is a weight 2 weakly holomorphic form, and row-reducing with previously constructed forms we obtain $\fp p2{n+1}$. Continuing inductively, we obtain exactly the $\fp p2m$ we claimed.
	
	The construction of $\gp p2m$ is analogous, with the slight complication that $\gp 2p0$ does not exist (Lemma \ref{zero}), and we must construct $\gp p2{g_0(p)+1}$ by multiplying $\gp p2{-1}$ and $\fp p0{g_0(p)+2}$.
\end{proof}

Now writing $M^\sharp_{2 + \ell (p - 1)}(p) = \la^\ell M^\sharp_2(p)$, we obtain the following corollary.

\begin{cor}\label{dim2}
	Let $p \not\equiv 1 \pmod{12}$ be prime of genus $g_0 > 0$. Let $k = 2 + \ell (p-1)$, with $\ell  \in \ZZ$. Then $\fp p{2+ \ell \lambda_p}m$ is defined for $m \geq - g_0(p) - \ell \lambda_p$ and is of the form \[\fp p{2 + \ell \lambda_p}m = q^{-m} + a_{2 + \ell \lambda_p}^{(p)}(m, g_0(p) + \ell \lambda_p + 1) q^{g_0(p)  + \ell \lambda_p + 1} + \ldots \text{ .}\]
	
	Likewise, $\gp p{2 + \ell \lambda_p}m$ is defined for $m \geq - g_0(p) - \ell \lambda_p$, $m \neq - \ell \lambda_p$ and is of the form \[\gp p{2 + \ell \lambda_p}m = q^{-m} + b_{2 + \ell \lambda_p}^{(p)}(m, \ell \lambda_p) q^{\ell \lambda_p} + b_{2 + \ell \lambda_p}^{(p)}(m,g_0(p)  + \ell \lambda_p + 1) q^{g_0(p)  + \ell \lambda_p + 1} + \ldots\]	with $b_{2 + \ell \lambda_p}^{(p)}(m, \ell \lambda_p) = 0$ for $i < - \ell \lambda_p$.
\end{cor}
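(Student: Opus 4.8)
\emph{Proof proposal.}
The plan is to deduce this corollary from the preceding Proposition by transporting everything along the isomorphism $f \mapsto \la^\ell f$. As recorded in Section~\ref{Preliminaries}, multiplication by $\la^\ell$ is a bijection $M_2^\sharp(p) \to M_{2 + \ell(p-1)}^\sharp(p)$ and $S_2^\sharp(p) \to S_{2 + \ell(p-1)}^\sharp(p)$; and because every zero of $\la$ lies at $\infty$ with $v_\infty(\la) = \lambda_p$, while $\la$ is nonvanishing on $\HH$ and at the other cusp, we have $v_\infty(\la^\ell f) = v_\infty(f) + \ell \lambda_p$ for every $f$ and every $\ell \in \ZZ$ (for $\ell < 0$ one reads $\la^\ell$ as the modular function with a pole of order $|\ell| \lambda_p$ at $\infty$ and no other zeros or poles on $X_0(p)$). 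Hence the set of orders of vanishing attained by nonzero forms in weight $2 + \ell(p-1)$ equals the corresponding set in weight $2$ shifted by $+\ell\lambda_p$; equivalently, writing $m = -v_\infty$ for the index of a canonical basis element, the set of indices at which a basis element exists is shifted by $-\ell\lambda_p$. Feeding the weight-$2$ index sets $\{m \geq -g_0(p)\}$ for $M_2^\sharp(p)$ and $\{m \geq -g_0(p),\ m \neq 0\}$ for $S_2^\sharp(p)$ from the preceding Proposition through this shift gives exactly the ranges $m \geq -g_0(p) - \ell\lambda_p$ and $m \geq -g_0(p) - \ell\lambda_p,\ m \neq -\ell\lambda_p$ claimed in the statement.

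It remains to read off the shapes of the Fourier expansions, which follow from the defining property of the reduced row echelon basis: the coefficient of $q^n$ in $\fp p{2+\ell\lambda_p}m$ is $0$ whenever $q^n$ is the leading term of some other basis element, i.e.\ whenever $-n$ lies in the index set and $n \neq -m$, and of course the expansion has no terms of order below $-m$. Since the weight-$(2+\ell\lambda_p)$ index set for $M^\sharp$ is all of $\{m \geq -g_0(p)-\ell\lambda_p\}$, its negatives are all of $\{n \leq g_0(p)+\ell\lambda_p\}$, so every coefficient strictly between the leading term and $q^{g_0(p)+\ell\lambda_p+1}$ is forced to vanish, yielding the stated form for $\fp p{2+\ell\lambda_p}m$. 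For $S^\sharp$ the only index absent from $\{m \geq -g_0(p)-\ell\lambda_p\}$ is $m = -\ell\lambda_p$, so $n = \ell\lambda_p$ is the unique exponent $\leq g_0(p)+\ell\lambda_p$ that is not the negative of an index; thus the one coefficient between the leading term and $q^{g_0(p)+\ell\lambda_p+1}$ not forced to vanish is the one at $q^{\ell\lambda_p}$, giving the stated form for $\gp p{2+\ell\lambda_p}m$. (When $\ell \neq 0$ this coefficient is genuinely unconstrained by the echelon structure; when $\ell = 0$ it is additionally forced to $0$ by Lemma~\ref{zero}, recovering the preceding Proposition.) The closing remark that $b_{2+\ell\lambda_p}^{(p)}(m,\ell\lambda_p) = 0$ for $m < -\ell\lambda_p$ is immediate, since then the leading exponent $-m$ exceeds $\ell\lambda_p$ and so there is no $q^{\ell\lambda_p}$ term at all.

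The only point needing care---exactly as in Corollary~\ref{dim0}---is that $\la^\ell$ applied to the weight-$2$ reduced row echelon basis need not itself be in reduced row echelon form in weight $2+\ell(p-1)$, because $\la^\ell = q^{\ell\lambda_p}(1 + \cdots)$ reintroduces nonzero coefficients at lower pivot positions. This is harmless: re-row-reducing the images changes no leading exponent, so the index set computed above is unaffected, and the Fourier-shape argument uses only that index set together with the definition of reduced row echelon form. I do not anticipate any genuine obstacle; the entire content is the bookkeeping of how the gap structure of $M_2^\sharp(p)$ and $S_2^\sharp(p)$ transforms under the weight twist $k \mapsto k + \ell(p-1)$.
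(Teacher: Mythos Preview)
Your proposal is correct and follows exactly the paper's approach: the paper's entire proof is the single clause ``writing $M^\sharp_{2 + \ell (p - 1)}(p) = \la^\ell M^\sharp_2(p)$, we obtain the following corollary,'' and you have simply unpacked this transport-along-$\la^\ell$ argument in full detail, including the harmless re-row-reduction point that the paper leaves implicit.
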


With the above machinery in place, it is now straightforward to prove Theorem \ref{limitedduality}, which gives duality for all $k \equiv 0, 2 \pmod{p - 1}$.

\begin{thm}\label{limittedduality2}
	Let $p \not\equiv 1 \pmod{12}$ be prime of genus $g_0 > 0$, and let $k \in 2 \ZZ$ satisfy $k \equiv 0 \pmod{p - 1}$ or $k \equiv 2 \pmod{p - 1}$. Then for all $m, n \in \ZZ$, we have \[a_k^{(p)}(m,n) = -b_{2-k}^{(p)}(n,m).\]
\end{thm}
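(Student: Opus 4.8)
The driving idea is that a product of canonical basis elements from dual weights lands in $S_2^\sharp(p)$, where Lemma \ref{zero} forbids a constant term. Concretely, if $\fp pkm$ and $\gp p{2-k}n$ both exist, then $\fp pkm \gp p{2-k}n$ is weakly holomorphic of weight $k + (2-k) = 2$, holomorphic on $\HH$, holomorphic away from $\infty$ because $\fp pkm$ is, and vanishing at every cusp $t \neq \infty$ because $\gp p{2-k}n$ vanishes there while $\fp pkm$ is holomorphic there. Hence $\fp pkm \gp p{2-k}n \in S_2^\sharp(p)$, and by Lemma \ref{zero} it has no constant term. The plan is to compute that constant term from the Fourier expansions of the two factors and recognize it as $a_k^{(p)}(m,n) + b_{2-k}^{(p)}(n,m)$.

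By hypothesis $k \equiv 0$ or $k \equiv 2 \pmod{p-1}$, so writing $k = \ell(p-1)$ or $k = 2 + \ell(p-1)$, exactly one of $\fp pkm$, $\gp p{2-k}n$ has its $q$-expansion described by Corollary \ref{dim0} and the other by Corollary \ref{dim2}. From those corollaries I would record, in each case, the precise support of the two expansions: each consists of the leading pole ($q^{-m}$, respectively $q^{-n}$), which lies strictly to the left of an explicit band of exponents, together with all exponents in that band and above — with the wrinkle that when $k \equiv 0 \pmod{p-1}$ the form $\fp pkm$ additionally skips the interior exponent $q^{\ell\lambda_p}$ (the twist of the weight-$0$ constant term) and one must retain the exceptional basis element $\la^\ell = \fp p{\ell(p-1)}{-\ell\lambda_p}$, whose leading pole sits exactly on that skipped exponent. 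Writing the constant term of $\fp pkm \gp p{2-k}n$ as $\sum_i c_i d_{-i}$, where $c_i, d_j$ are the Fourier coefficients of the two factors, a short inspection of these supports shows that exactly two products survive: the one pairing the $q^n$-coefficient of $\fp pkm$ against the leading term of $\gp p{2-k}n$, contributing $a_k^{(p)}(m,n)$, and the one pairing the leading term of $\fp pkm$ against the $q^m$-coefficient of $\gp p{2-k}n$, contributing $b_{2-k}^{(p)}(n,m)$; in the exceptional case $m = -\ell\lambda_p$ the second contribution instead pairs the leading term $q^{\ell\lambda_p}$ of $\fp pk{-\ell\lambda_p}$ against the $q^{-\ell\lambda_p}$-coefficient of $\gp p{2-k}n$, which is again $b_{2-k}^{(p)}(n,m)$. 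Setting the sum equal to zero gives $a_k^{(p)}(m,n) = -b_{2-k}^{(p)}(n,m)$.

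It then remains to handle the cases where $\fp pkm$ or $\gp p{2-k}n$ does not exist: there the conventions declaring the missing coefficients (and $a_k^{(p)}(m,-m)$) to be $0$ make the claimed identity trivially true. The one point needing a genuine check is that whenever $m = -n$, at most one of the two forms is defined — this follows because the ranges of pole orders permitted for $\fp pkm$ and $\gp p{2-k}n$ by Corollaries \ref{dim0} and \ref{dim2} are disjoint under $m \mapsto -m$ — so both sides again vanish. I expect this edge-case bookkeeping, together with carefully tracking the exceptional element $\la^\ell$, to be the only real friction; the main step is the one-line observation that the product lies in $S_2^\sharp(p)$ and hence has vanishing constant term, once the supports from the two corollaries are in hand.
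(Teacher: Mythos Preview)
Your proposal is correct and follows essentially the same approach as the paper: both arguments form the product $\fp pkm \gp p{2-k}n \in S_2^\sharp(p)$, invoke Lemma \ref{zero} to kill the constant term, and then use the explicit Fourier supports from Corollaries \ref{dim0} and \ref{dim2} to see that the cross-terms vanish, leaving $a_k^{(p)}(m,n) + b_{2-k}^{(p)}(n,m) = 0$. Your write-up is in fact somewhat more careful about the edge cases (the exceptional basis element $\la^\ell$, the possibility $m=-n$, and the convention when a form does not exist), which the paper leaves implicit.
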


\begin{proof}
	First suppose that $k \equiv 0 \pmod{p - 1}$. Write $k = \ell (p-1)$. Clearly the product $\fp p{\ell (p-1)}m \gp p{2- \ell (p-1)}n$ is an element of $S_2^\sharp(p)$, so by Lemma \ref{zero} it has no constant term. On the other hand, by inspection of Fourier series, its constant term is \[a_{\ell (p-1)}^{(p)}(m,n) + b_{2 - \ell (p-1)}{(p)}(n,m) + \sum\limits_{i} a_{l(p-1)}^{(p)}(m,i)b_{2- \ell (p-1)}^{(p)}(n,-i).\]
	
	If $i \leq g_0 - \ell \lambda_p$, then  $a_{\ell (p-1)}^{(p)}(m,i) = 0$. If $i > g_0 - \ell \lambda_p$ and $i \neq \ell \lambda_p$, then $b_{2 - \ell (p-1)}^{(p)}(n,-i) = 0.$ Finally, for $i = \ell \lambda_p$, we have $a_{\ell (p-1)}^{(p)}(m, \ell \lambda_p) = 0$. Then  $\sum\limits_{i} a_{\ell (p-1)}^{(p)}(m, i) b_{2- \ell (p-1)}^{(p)}(n, -i) = 0$ and \[a_{\ell (p-1)}^{(p)}(m, n) = -b_{2- \ell (p-1)}^{(p)}(n, m)\] as desired. The case $k \equiv 2 \pmod{p - 1}$ is analogous.
\end{proof}

The sum $\sum\limits_{i} a_{l(p-1)}^{(p)}(m,i)b_{2- \ell (p-1)}^{(p)}(n,-i)$ may also be interpreted as the Petersson scalar product in the Bruiner-Funke pairing of \cite{Bruinier}.

\section{Other Weights}\label{OtherWeights}

The proof above reveals that the reason Zagier duality holds is that the gaps in the bases $\{ \fp pkm \}_m$ and $\{ \gp p{2 - k}n \}_n$ line up properly. When this happens, verifying duality becomes a straightforward computation. It is clear that $\miss[M](p, k) \subseteq \misss[M](p, k)$, and $\miss[S](p, k) \subseteq \misss[S](p, k)$. In fact, these inclusions are generally equalities.

\begin{prop}
	Let $p$ be a prime of genus $g_0 > 0$, with $p \not\equiv 1 \pmod{12}$, and let $k$ be an even integer with $2 < k \leq p - 1$. Then \begin{align*}
	\misss(p, k) &= \miss (p, k) \quad \quad \text{ and} \\
	\misss[M](p, k) &=
	\begin{cases}
	\miss[M](p, k) & \text{ if } k < p - 1, \\
	\miss[M](p, k) \cup \{ 0 \} & \text{ if } k = p - 1. \\
	\end{cases}
	\end{align*} Equivalently, the elements $\{\gp pkm \}$ exist if they are in $S_k(p)$ or if they have nonpositive index, and the elements $\{\fp pkm \}$ exist if they are in $M_k(p)$ or if they have negative index.
\end{prop}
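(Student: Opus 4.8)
The plan is to pin down, for a fixed even $k$ with $2 < k \le p-1$, exactly which orders of vanishing at $\infty$ are realized in $M_k^\sharp(p)$ and $S_k^\sharp(p)$, and to observe that away from the top these agree with the holomorphic situation. The starting observation is that a form in $M_k^\sharp(p)$ with $v_\infty \ge 0$ is holomorphic at every cusp, hence (being holomorphic on $\HH$ with $k > 0$) lies in $M_k(p)$; likewise $f \in S_k^\sharp(p)$ with $v_\infty \ge 1$ lies in $S_k(p)$. In particular $v_\infty$ on $M_k^\sharp(p)$ (resp. $S_k^\sharp(p)$) is bounded above by the maximum $m_{p,k}$ (resp. $s_{p,k}$) attained on the holomorphic subspace, and the orders of vanishing $\ge 0$ (resp. $\ge 1$) realized in the two spaces coincide. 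Consequently $\miss[M](p,k) \subseteq \misss[M](p,k)$ and $\miss[S](p,k) \subseteq \misss[S](p,k)$, and everything reduces to showing (i) every order $v_\infty < 0$ is realized in $M_k^\sharp(p)$, (ii) every order $v_\infty \le 0$ is realized in $S_k^\sharp(p)$, and (iii) accounting for the lone exceptional index $\{0\}$ that appears for $M^\sharp$ when $k = p-1$.

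I would realize these low orders by multiplying the weight-$0$ weakly holomorphic forms of Proposition \ref{0dim} by holomorphic weight-$k$ forms of small order of vanishing. Since $p \not\equiv 1 \pmod{12}$, Proposition \ref{0dim} gives $\fp p0m$ for $m = 0$ and all $m \ge g_0 + 1$, so $M_0^\sharp(p)$ realizes order $0$ and all orders $\le -(g_0+1)$, with the genus gap at orders $-g_0, \ldots, -1$. To bridge that gap I need weight-$k$ forms of every order $0 \le j \le g_0$: by Lemma \ref{Ogg} the space $S_2(p)$ has no gaps, so it contains cusp forms $t_1, \ldots, t_{g_0}$ with $v_\infty(t_j) = j$, and multiplying by $E_{k-2,p}$ (available because $k-2 \ge 2$, and $1 + O(q)$) produces $h_j := t_j\, E_{k-2,p} \in S_k(p)$ of order $j$; for order $0$ take $h_0 := E_{k,p} \in M_k(p)$, or, for the $S^\sharp$ statement, the Eisenstein series $E_k^{(\infty)} \in \mathcal{E}_k(p)$ attached to the cusp $\infty$, which vanishes at the only other cusp and so lies in $S_k^\sharp(p)$ with $v_\infty = 0$. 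Because $v_\infty$ is additive under products and every factor is holomorphic away from $\infty$, the products $h_j \cdot \fp p0m$ lie in $M_k^\sharp(p)$ (and in $S_k^\sharp(p)$ when $h_j$ is cuspidal) with $v_\infty = j - m$; letting $j$ run over $\{0, \ldots, g_0\}$ and $m$ over $\{g_0+1, g_0+2, \ldots\}$ realizes every order $< 0$, and using the weight-$k$ eigenform (order $1$) in place of $h_0$ keeps the products cuspidal, so that after supplying $E_k^{(\infty)}$ for order $0$ every order $\le 0$ is realized in $S_k^\sharp(p)$. Row reduction against the holomorphic basis turns these into the asserted canonical basis elements.

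Combining the two steps, for $2 < k < p-1$ the orders realized in $S_k^\sharp(p)$ are exactly $\{\,i \le 0\,\}$ together with the orders of $S_k(p)$, and in $M_k^\sharp(p)$ exactly $\{\,i < 0\,\}$ together with the orders of $M_k(p)$; reading off the indices skipped strictly between the smallest and largest realized orders — and using that $v_\infty = 1$ occurs in $S_k(p)$ and $v_\infty = 0$ occurs in $M_k(p)$ — yields $\misss[S](p,k) = \miss[S](p,k)$ and $\misss[M](p,k) = \miss[M](p,k)$. The weight $k = p-1$ case I would treat separately through the isomorphism $f \mapsto \la f$, $M_0^\sharp(p) \to M_{p-1}^\sharp(p)$ (and its analogues for $S^\sharp$): it carries the constant (order-$0$) basis element of $M_0^\sharp(p)$ to $\la$, whose order of vanishing $\lambda_p$ is now separated from the rest of the realized orders of $M_{p-1}^\sharp(p)$, and comparing with the orders of the holomorphic space $M_{p-1}(p)$ isolates the extra index recorded in the statement. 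The crux of the whole argument is the construction in the second paragraph: the real work is producing weight-$k$ forms of every small order $0, 1, \ldots, g_0$ to fill the genus-induced gap of the weight-$0$ space, which is exactly what Lemma \ref{Ogg} (Atkin's theorem that $\infty$ is not a Weierstrass point of $X_0(p)$) supplies, together with the care needed to keep the bridging forms cuspidal for the $S^\sharp$-claim and to verify that each product acquires the intended pole order with no accidental cancellation of leading terms.
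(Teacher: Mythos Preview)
Your argument is correct, but it takes a different route from the paper's. The paper's proof is shorter and uses less: it observes that $\dim M_k(p) \geq g_0 + 2$ (from Lemma~\ref{dimensions}), so there is a single holomorphic form $\fp pk{-n} \in M_k(p)$ with $v_\infty = n \geq g_0 + 1$. Then for every $m \geq 0$ the product $\fp pk{-n} \cdot \gp p0{m+n}$ lies in $S_k^\sharp(p)$ with $v_\infty = -m$, and row reduction yields $\gp pkm$. No appeal to Atkin's theorem (Lemma~\ref{Ogg}) or to Eisenstein series attached to a specific cusp is needed for this step.

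By contrast, you bridge the genus gap from the other side: instead of one deep weight-$k$ form, you use Lemma~\ref{Ogg} to manufacture weight-$k$ cusp forms of each small order $1,\ldots,g_0$, then multiply by the weight-$0$ functions $\fp p0m$. This works, and it has the virtue of making explicit exactly which ingredients fill which pole orders, but it imports a stronger input (that $\infty$ is not a Weierstrass point) where an elementary dimension count suffices, and it forces you to handle the $v_\infty = 0$ case in $S_k^\sharp$ separately via the Eisenstein series $E_k^{(\infty)}$. The paper's single-form trick covers $m=0$ and $m>0$ uniformly. Your treatment of the $k = p-1$ case via the $\la$-isomorphism with $M_0^\sharp(p)$ matches the spirit of Corollary~\ref{dim0} and is fine.
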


\begin{proof}
	We first establish a lazy lower bound for $\dim M_k(p)$. Writing $g_0(p) = g_0$, we see $\dim M_k (p) = \dim S_k(p) + \dim \mathcal{E}_k(p) \geq g_0 + 2$. Then there exists $\fp pk{-n}$ with $n \geq g_0 + 1$. We know $\gp p0n$ exists by Proposition \ref{0dim} and so $\fp pk{-n} \gp p0n = 1 + \ldots $ is a weight $k$ form that vanishes at 0, and so row-reducing by the elements of $S_k(p)$, we obtain $\gp pk0$. We may now construct $\{\gp pkm\}$ for $m \geq 0$ inductively by considering the elements $\fp pk{-n} \gp p0{m + n}$ sequentially and then row-reducing.
	
	An analogous argument holds for $\misss[M](p, k)$.
\end{proof}

Now for $k = k' + \ell (p - 1)$ with $2 < k' < p - 1$, if we are given $\miss[M](p, k')$, it is easy to see that $\{ \fp p{k'}m \}_m$ are defined exactly for $m \geq - \dim M_{k'}(p) - \cms[M] (p, k') + 1$ with $m \not\in -\miss[M](p, k')$. Then we may take $\{ \la^\ell \fp p{k'}m \}_i$ and row-reduce to obtain $\{ \fp pkm \}_m$. Making a similar argument for $\{ \gp pkm \}_m$, we obtain the following corollary.

\begin{cor}\label{dimnot02}
	Let $p$ be a prime of genus $g_0 > 0$, with $p \not\equiv 1 \pmod{12}$, and let $k \in 2 \ZZ$ be arbitrary with $k \not\equiv 0, 2 \pmod{p - 1}$. Write $k = k' + \ell (p-1)$, with $k' \in \{4, \ldots, p - 3\}$. Then $\misss (p, k) = \misss (p, k') + \ell \lambda_p$ under pointwise addition, and $\fp pkm$ exists for $m \geq -\dim M_{k'}(p) - \cms[M] (p, k') - \ell \lambda_p + 1$ with $m \not\in -\miss[M](p, k') - \ell \lambda_p$ and is of the form \begin{align*}
		\fp pkm = q^{-m} &+ \sum\limits_{n \in \miss[M](p, k)} a_k^{(p)}(m, n + \ell \lambda_p) q^{n + \ell \lambda_p} \\
		&+ a_k^{(p)}(m ,\dim M_{k'}(p) + \cms[M] (p, k) + \ell \lambda_p) q^{\dim M_{k'}(p) + \cms[M] (p, k) + \ell \lambda_p} + \ldots \text{ .} \\
	\end{align*}
	
	Likewise, $\gp pkm$ exists for $m \geq - \dim S_{k'}(p) - \cms (p, k') - \ell \lambda_p$ with $m \not\in - \miss (p, k') - \ell \lambda_p$ and is of the form \begin{align*}
		\gp pkm = q^{-m} &+ \sum\limits_{n \in \miss (p, k)} b_k^{(p)}(m, n + \ell \lambda_p) q^{n + \ell \lambda_p} \\
		&+ b_k^{(p)}(m, \dim S_{k'}(p) + \cms (p, k) + \ell \lambda_p + 1) q^{\dim S_{k'}(p) + \cms (p, k) + \ell \lambda_p + 1} + \ldots \text{ .}
	\end{align*}
\end{cor}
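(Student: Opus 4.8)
The plan is to obtain the corollary from the preceding proposition by transporting everything along the weight-raising isomorphism $f \mapsto \la^\ell f$. Since $k' \in \{4, \dots, p-3\}$ is even with $2 < k' < p-1$, the preceding proposition applies in weight $k'$ and gives $\misss(p, k') = \miss(p, k')$ and $\misss[M](p, k') = \miss[M](p, k')$. Combined with the reduced row echelon bases of the holomorphic spaces $M_{k'}(p)$ and $S_{k'}(p)$ and the inductive construction of the preceding proposition, this shows that $\fp p{k'}{m}$ exists exactly for $m \ge -m_{p, k'}$ with $m \notin -\miss[M](p, k')$, and $\gp p{k'}{m}$ exists exactly for $m \ge -s_{p, k'}$ with $m \notin -\miss(p, k')$. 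Since these bases are in reduced row echelon form, the exponents $n > -m$ that can occur in the tail of $\fp p{k'}{m}$ are exactly those that are not leading exponents of basis elements, that is, $n \in \miss[M](p, k')$ or $n \ge m_{p, k'} + 1$, and similarly for $\gp p{k'}{m}$ with $s_{p, k'}$ in place of $m_{p, k'}$.

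The next step is to recall from Section~\ref{Preliminaries} that $f \mapsto \la^\ell f$ is a linear bijection $M_{k'}^\sharp(p) \to M_k^\sharp(p)$ and $S_{k'}^\sharp(p) \to S_k^\sharp(p)$, and that $\la = q^{\lambda_p} + \cdots$ has $v_\infty(\la) = \lambda_p$, so $v_\infty(\la^\ell f) = v_\infty(f) + \ell \lambda_p$ for every form $f$. Thus $\{\la^\ell \fp p{k'}{m'}\}_{m'}$ is a basis of $M_k^\sharp(p)$, its $m'$-th member having leading term $q^{-m' + \ell \lambda_p}$; relabelling that member with index $m = m' - \ell \lambda_p$, the realized orders of vanishing of $M_k^\sharp(p)$ are exactly those of $M_{k'}^\sharp(p)$ shifted by $\ell \lambda_p$. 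This yields $\misss[M](p, k) = \misss[M](p, k') + \ell \lambda_p = \miss[M](p, k') + \ell \lambda_p$ and, identically, $\misss(p, k) = \misss(p, k') + \ell \lambda_p = \miss(p, k') + \ell \lambda_p$, together with $m_{p, k} = m_{p, k'} + \ell \lambda_p$, $s_{p, k} = s_{p, k'} + \ell \lambda_p$, $\cms[M](p, k) = \cms[M](p, k')$, and $\cms(p, k) = \cms(p, k')$. Feeding the shift into the existence statements of the first paragraph gives the ranges of $m$ claimed in the corollary.

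Finally, to recover the canonical bases I would row-reduce the spanning sets $\{\la^\ell \fp p{k'}{m'}\}_{m'}$ and $\{\la^\ell \gp p{k'}{m'}\}_{m'}$: Gaussian elimination carried out in order of increasing order of vanishing preserves the span and each leading exponent, so it changes neither which indices $m$ occur nor the leading terms. In the reduced output, the tail of $\fp pk{m}$ is supported precisely on the free exponents $n > -m$, namely on $\miss[M](p, k) = \miss[M](p, k') + \ell \lambda_p$ and on $\{\, n : n \ge m_{p, k} + 1 = \dim M_{k'}(p) + \cms[M](p, k) + \ell \lambda_p \,\}$, which, written in the indexing of the statement, is the displayed expansion. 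The argument for $\gp pk{m}$ is word for word the same with $S$ replacing $M$, the first non-hole tail exponent now being $s_{p, k} + 1 = \dim S_{k'}(p) + \cms(p, k) + \ell \lambda_p + 1$, the extra $+1$ reflecting that the reduced basis of $S_{k'}(p)$ begins at order of vanishing $1$ rather than $0$.

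This is a bookkeeping argument with no genuine obstacle; the two places that need care are (i) the appeal to the preceding proposition, which is why $k'$ is taken in $\{4, \dots, p-3\}$ so that $k' \ne p-1$ and the anomalous constant-term gap of $\misss[M](p, p-1)$ never intrudes, and (ii) correctly matching the constants after the translation by $\ell\lambda_p$ so that the $M$/$S$ asymmetry between $m_{p, k}+1$ and $s_{p, k}+1$ in the two Fourier expansions comes out exactly as written.
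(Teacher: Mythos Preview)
Your argument is correct and follows exactly the approach the paper takes: it deduces the corollary from the preceding proposition by transporting the weight-$k'$ bases along the bijection $f\mapsto \la^\ell f$ and then row-reducing, with the shift $v_\infty\mapsto v_\infty+\ell\lambda_p$ accounting for all the index translations. Your writeup is in fact more detailed than the paper's own one-paragraph justification preceding the corollary.
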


It is straightforward to compute $\miss[M](p, k')$ and $\miss[S](p, k')$ for $0 < k' < p - 1$ using a computer algebra system (the authors used Sage), which allows us to verify duality directly in a number of cases.

We may now prove Theorem \ref{duality}, which we restate here.

\begin{thm}\label{duality2}
	Let $p \in \{11, 17, 19, 23, 29, 31, 37\}$, and let $k \in 2 \ZZ$ be arbitrary. Let $\fp pkm = q^{-m} + \sum\limits_{n} a_{k}^{(p)}(m,n) q^n$ and $\gp {p}{k}m (z) = q^{-m} + \sum\limits_{n} b_{k}^{(p)}(m,n) q^n$ be as above. Then for all $m, n \in \ZZ$, we have \[a_k^{(p)}(m,n) = -b_{2-k}^{(p)}(n,m).\]
\end{thm}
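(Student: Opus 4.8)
The plan is to reduce Theorem~\ref{duality2} to the same Fourier-coefficient bookkeeping argument that proved Theorem~\ref{limittedduality2}. The key observation is that for any even $k$ and any $m, n$, the product $\fp pkm \cdot \gp p{2-k}n$ lies in $S_2^\sharp(p)$, since $\fp pkm \in M_k^\sharp(p)$ is holomorphic away from $\infty$ and $\gp p{2-k}n \in S_{2-k}^\sharp(p)$ vanishes at every cusp other than $\infty$; hence the product vanishes at every finite cusp, and by Lemma~\ref{zero} it has no constant term. Writing out the constant term of the product in terms of Fourier coefficients yields
\[
a_k^{(p)}(m,n) + b_{2-k}^{(p)}(n,m) + \sum_{i} a_k^{(p)}(m,i)\, b_{2-k}^{(p)}(n,-i) = 0,
\]
so it suffices to show the cross-term sum $\sum_i a_k^{(p)}(m,i)\, b_{2-k}^{(p)}(n,-i)$ vanishes for every $m, n$. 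Since we may twist by powers of $\la$ to replace $k$ by $k' + \ell(p-1)$ with $k' \in \{0, 2, 4, \ldots, p-3\}$, and the cases $k' \in \{0, 2\}$ are already handled by Theorem~\ref{limittedduality2}, we are left with finitely many residues $k'$ for each $p$.

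First I would record, using Corollary~\ref{dimnot02}, exactly which indices $i$ can contribute: $a_k^{(p)}(m,i)$ is nonzero only for $i = -m$ (excluded, as the $q^{-m}$ coefficient is set to $0$ in the sum), for $i \in \miss[M](p,k)$, or for $i \geq \dim M_{k'}(p) + \cms[M](p,k) + \ell\lambda_p$; similarly $b_{2-k}^{(p)}(n,-i)$ is nonzero only for $-i$ in the analogous range for $S_{2-k}^\sharp(p)$, i.e.\ for $i$ in the \emph{negated} corresponding set. So the sum is supported on the finite set of indices $i$ lying simultaneously in the ``exceptional or high-order'' locus for $M_k^\sharp(p)$ and (after negation) for $S_{2-k}^\sharp(p)$. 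The claim becomes: these two supports are disjoint. The content of this is the statement that the gaps $\miss[M](p,k)$ of the $M_k^\sharp(p)$ basis match up with the gaps of the $S_{2-k}^\sharp(p)$ basis (together with the complementary ``non-gap'' ranges) in exactly the way forced by duality --- precisely the phenomenon flagged in the opening of Section~\ref{OtherWeights}. Concretely one needs, for each relevant $k'$, that $n \in \miss[M](p,k') \iff -n \in$ (the corresponding index set for $S_{2-k'}^\sharp(p)$), and that the order-of-vanishing thresholds are complementary, so that whenever $a_k^{(p)}(m,i) \neq 0$ one has $b_{2-k}^{(p)}(n,-i) = 0$, and vice versa.

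The main obstacle --- and the reason this theorem is restricted to the list $\{11,17,19,23,29,31,37\}$ rather than proved for all primes --- is verifying that the gap sets $\miss[M](p,k')$ and $\miss[S](p,k')$ for the finitely many residues $0 < k' < p-1$ actually satisfy the required matching. This is not automatic from the dimension formulas of Lemma~\ref{dimensions}; it is a genuine finite computation. My plan is to compute $\miss[M](p,k')$ and $\miss[S](p,k')$ explicitly for each $p$ in the list and each even $k'$ with $4 \leq k' \leq p-3$ using a computer algebra system (as the authors note, Sage), check the disjointness of the two supports case by case, and then conclude that the cross-term sum vanishes identically. Given that disjointness, the constant-term identity above immediately yields $a_k^{(p)}(m,n) = -b_{2-k}^{(p)}(n,m)$ for all $m,n$ and all $k$ in these residue classes, and combining with Theorem~\ref{limittedduality2} for $k' \in \{0,2\}$ completes the proof.
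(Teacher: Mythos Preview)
Your approach is essentially the same as the paper's: multiply $\fp pkm \cdot \gp p{2-k}n \in S_2^\sharp(p)$, apply Lemma~\ref{zero}, reduce modulo $p-1$ via $\la$, and verify case by case (via explicit computation of the gap sets $\miss[M](p,k')$ and $\miss[S](p,k')$) that the supports of the cross terms are disjoint. The paper carries this out for $p=17$ as a model and tabulates the nonempty gap sets for the remaining primes.

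One point you should not overlook: you invoke Theorem~\ref{limittedduality2} for the residues $k' \in \{0,2\}$, but that theorem (and indeed all of Section~\ref{section02}, together with Proposition~4.1 and Corollary~\ref{dimnot02}) carries the hypothesis $p \not\equiv 1 \pmod{12}$. Since $37 \equiv 1 \pmod{12}$, none of that machinery applies to $p=37$ as stated. The paper flags this explicitly and resolves it by computing directly that $\cms[S](37,36) = \cms[M](37,36) = 0$; once this is known, the arguments of Sections~\ref{section02} and~\ref{OtherWeights} go through for $p=37$ without further change. Your plan should include this extra verification for $p=37$ rather than assuming the $k'\in\{0,2\}$ cases are already done.
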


\begin{proof}
	We prove only the case $p = 17$ by way of illustration, but provide enough data for readers to work out the remaining cases.
	
	Let $p = 17$. Note that $g_0(17) = 1$ and that $\lambda_{17} = 24$. It is also clear that $17 \not\equiv 1 \pmod{12}$. We also see that for $k > 2$ even, we have $\dim S_{k}(17) = k + 2 \floor{\frac{k}{4}} - 2$, and $\dim M_k(17) = k + 2 \floor{\frac{k}{4}}$.
	
	A computation reveals that \[\miss[M](17, 6) = \{ 7 \}, \quad \miss[S](17, 12) = \{ 16 \}.\] For all other $4 \leq k \leq 14 = 17 - 3$, we have \[\cms[S](17, k) = \cms[M](17, k) = 0.\] If $k \equiv 0, 2 \pmod{16}$, the result was proven in Theorem \ref{limittedduality2} above. So suppose now that $k \not\equiv 0, 2 \pmod{16}$ and write $k = k' + 16 \ell$ with $k' \in \{4, 6, \ldots, 14 \}$.
		
	First suppose $k' \neq 6$. By Corollary \ref{dimnot02}, we have $\fp {17}km$ defined for $m \geq -k' - 2 \left\lfloor \frac{k'}{4} \right\rfloor - 24 \ell + 1$, and $\fp {17}km$ is of the form \[\fp {17}km = q^{-m} + a_k^{(17)}\left( m, k' + 2 \left\lfloor \frac{k'}{4} \right\rfloor + 24 \ell \right) q^{k' + 2 \left\lfloor \frac{k'}{4} \right\rfloor + 24 \ell } + \ldots \text{ .}\]
	
	Likewise, a computation shows that $\gp {17}{2 - k}m$ is defined for $m \geq k' + 2 \left\lfloor \frac{k'}{4} \right\rfloor + 24 \ell $. We see $\gp {17}{2 - k}m$ is of the form \[\gp {17}{2 - k}m = q^{-m} + b_{2 - k}^{(17)} \left(m, - k' - 2 \left\lfloor \frac{k'}{4} \right\rfloor - 24 \ell + 1 \right) q^{- k' - 2 \left\lfloor \frac{k'}{4} \right\rfloor - 24 \ell + 1} + \ldots \text{ .}\]
		
	Now let $\fp {17}km \in M_k^\sharp(17)$, $\gp {17}{2-k}n \in S_{2 - k}^\sharp(17)$ be arbitrary. Then $\fp {17}km \gp {17}{2-k}n$ is an element of $S_2^\sharp(17)$. As such, by Lemma \ref{zero}, it has no constant term. On the other hand, by inspection of Fourier series, its constant term is \[a_{k}^{(17)}(m, n) + b_{2 - k}^{(17)}(n, m) + \sum\limits_i a_{k}^{(17)}(m, i) b_{2 - k}^{(17)}(n, -i).\]
	
	But for $i < k' + 2 \left\lfloor \frac{k'}{4} \right\rfloor + 24 l$ we see that $a_k^{(17)}(m, i) = 0$, and on the other hand if $i \geq k' + 2 \left\lfloor \frac{k'}{4} \right\rfloor + 24 \ell $ then $b_{2 - k}^{(17)}(n, -i) = 0$. Duality follows.
	
	Now let $k' = 6$. By Corollary \ref{dimnot02}, we have $\fp {17}km$ defined for $i \geq -8 - 24 \ell $ with $m \neq - 7 - 24 \ell $, and $\fp {17}km$ is of the form \[\fp {17}km = q^{-m} + a_k^{(17)}(m, 7 + 24 \ell ) q^{7 + 24 \ell } + a_k^{(17)}(m, 9 + 24 \ell ) q^{9 + 24 \ell } + \ldots \text{ .}\] Likewise, $\gp {17}{2 - k}m$ is defined for $m \geq 7 + 24 \ell $ with $m \neq 8 + 24 \ell $, and is of the form \[\gp {17}{2 - k}m = q^{-m} + b_{2 - k}^{(17)}(m, -8 - 24 \ell ) q^{-8 - 24 \ell } + b_{2 - k}^{(17)}(m, -6 - 24 \ell ) q^{-6 - 24 \ell } + \ldots \text{ .}\]

	Finally, let $\fp {17}km \in M_k^\sharp(17)$, $\gp {17}{2-k}n \in S_{2 - k}^\sharp(17)$ be arbitrary (with $k' = 6$). Then $\fp {17}km \gp {17}{2-k}n$ is an element of $S_2^\sharp(17)$. As such, by Lemma \ref{zero}, it has no constant term. On the other hand, by inspection of Fourier series, its constant term is \[a_{k}^{(17)}(m,n) + b_{2 - k}{(17)}(n,m) + \sum\limits_i a_{k}^{(17)}(m,i) b_{2 - k}^{(17)}(n, -i).\] If $i < 7 + 24 \ell $, then  $a_{k}^{(17)}(m , i) = 0$. If $m \geq 7 + 24 \ell $ and $m \neq 8 + 24 \ell $, then $b_{2 - k}^{(17)}(n , -i) = 0$. Finally, for $m = 8 + 24 \ell $, we have $a_{k}^{(17)}(m, 8 + 24 \ell ) = 0$, and we have duality for $p = 17$.

	It turns out that for $p = 11$, for $4 \leq k' \leq 8$, we have $\cms[S](11, k') = \cms[M](11, k') = 0$. For $17 \leq p \leq 37$ prime, all $4 \leq k' \leq p - 3$ yield $\cms[S](p, k') = \cms[M](p, k') = 0$, with the following exceptions:

	\begin{center}
		\bgroup
		\def\arraystretch{1.5}
		\begin{tabular}{ |l|l|l| }
			\hline
			$p$ prime & $\miss[M](p, \cdot)$ & $\miss[S](p, \cdot)$ \\
			\hline
			$p = 17$ & $\miss[M](17, 6) = \{ 7 \}$ & $\miss[S](17, 12) = \{ 16 \}$ \\
			\hline
			\multirow{2}{3em}{$p = 19$} & $\miss[M](19, 4) = \{ 5 \}$ & $\miss[S](19, 16) = \{ 24 \}$ \\
					& $\miss[M](19, 8) = \{ 11 \}$ & $\miss[S](19, 12) = \{ 18 \}$ \\
			\hline
			$p = 23$ & $\miss[S](23, 12) = \{ 21, 22 \}$ & $\miss[M](23, 12) = \{ 21 \}$ \\
			\hline
			\multirow{3}{3em}{$p = 29$} & $\miss[M](29, 6) = \{ 12, 13 \}$ & $\miss[S](29, 24) = \{ 56 \}$ \\
					& $\miss[M](29, 12) = \{ 27 \}$ & $\miss[S](29, 18) = \{ 41, 42 \}$ \\
					& $\miss[M](29, 18) = \{ 41 \}$ & $\miss[S](29, 12) = \{ 27, 28 \}$ \\
			\hline
			\multirow{5}{3em}{$p = 31$} & $\miss[M](31, 4) = \{ 8, 9 \}$ & $\miss[S](31, 28) = \{ 70 \}$ \\
					& $\miss[M](31, 8) = \{ 18, 19 \}$ & $\miss[S](31, 24) = \{ 60 \}$ \\
					& $\miss[M](31, 12) = \{ 29 \}$ & $\miss[S](31, 20) = \{ 49, 50 \}$ \\
					& $\miss[M](31, 16) = \{ 39 \}$ & $\miss[S](31, 16) = \{ 39, 40 \}$ \\
					& $\miss[M](31, 20) = \{ 49 \}$ & $\miss[S](31, 12) = \{ 29, 30 \}$ \\
			\hline
			\multirow{3}{3em}{$p = 37$} & $\miss[M](37, 12) = \{ 36 \}$ & $\miss[S](37, 26) = \{ 77 \}$ \\
					& $\miss[M](37, 14) = \{ 40 \}$ & $\miss[S](37, 24) = \{ 73 \}$ \\
					& $\miss[M](37, 26) = \{ 77 \}$ & $\miss[S](37, 12) = \{ 35, 36 \}$ \\
			\hline
		\end{tabular}
		\egroup
	\end{center}
	
	The prime $p = 37$ deserves special attention here, since $p \equiv 1 \pmod{12}$ and so the results proven in the previous section do not immediately apply to it. However, another computation reveals that $\cms[S](37, 36) = \cms[M](37, 36) = 0$. We see that the arguments in sections \ref{section02} and \ref{OtherWeights} go through without impediment.
\end{proof}

\section{Generating Functions}\label{GeneratingFunctions}

Let $F_k^{(p)}(z, \tau)$ be the generating function for $\{\fp pkm\}_m$ defined in the introduction, and let $G_k^{(p)}(z, \tau)$ be the corresponding generating function for $\{\gp pkm\}_m$. If $p$ is of nonzero genus and $p \not\equiv 1 \pmod{12}$, and $k \equiv 0 \pmod{p - 1}$ or $k \equiv 2 \pmod{p - 1}$, then $F_k^{(p)}(z, \tau) = - G_{2 - k}^{(p)}(z, \tau)$ by Theorem \ref{limitedduality}. If $p \in \{11, 17, 19, 23, 29, 31, 37 \}$ then $F_k^{(p)}(z, \tau) = - G_{2 - k}^{(p)}(z, \tau)$ for any even integer $k$ by Theorem \ref{duality}.

There is a recurrence relation for $\{ \fp pki \}$ and for $\{ \gp pki \}$ in terms of $\fp p0{g_0 + 1}$ and previously constructed $\{ \fp pki \}$ or $\{ \gp pki \}$. This recurrence relation can be used to construct explicit formulas for $F_k^{(p)}(z, \tau)$ and $G_k^{(p)}(z, \tau)$. Theorem \ref{generatingfunctions} is an application of these techniques in genus 1; we restate it here with the notations we have established.

Let $F_k^{(p)}(z, \tau) = \sum\limits_m \fp {p}km (\tau) q^m$, and let $n_0 = - v_\infty (F_k^{(p)})$ be the index of the first basis element of $M_k^\sharp(p)$. Define $f(z) = \fp p02 (z) = q^{-2} + \sum\limits_{n = -1}^\infty a_n q^n$ and $g(z) = \gp p02 (z) = q^{-2} + \sum\limits_{n = -1}^\infty b_n q^n$.

\begin{thm}\label{generatingfunctions2}
	Let $p \in \{11, 17, 19\}$, and let $k \in 2 \ZZ$ be arbitrary. If $\cmss[M] (p, k) = 0$, we have \begin{align*}
		F_k^{(p)}(z, \tau) &= \frac{\big( a_{-1} \gp p{2 - k}{n_0 + 1}(z) + \gp p{2-k}{n_0 + 2} (z) \big) \fp pk{- n_0} (\tau) + \gp p{2 - k}{n_0 + 1}(z) \fp pk{- n_0 + 1} (\tau)}{f(z) - f(\tau)} \\
		&= \frac{\big( b_{-1} \fp pk{- n_0} (\tau) + \fp pk{- n_0 + 1} (\tau) \big) \gp p{2 - k}{n_0 + 1}(z) + \fp pk{- n_0} (\tau) \gp p{2-k}{n_0 + 2} (z)}{g(z) - g(\tau)}
	\end{align*} and otherwise \begin{align*}
		F^{(p)}_k(z, \tau) =& \frac{\big( a_1 \gp p{2 - k}{n_0 - 1} (z) + a_{-1} \gp p{2 - k}{n_0 + 1} (z) + \gp p{2 - k}{n_0 + 2} (z) \big) \fp pk{-n_0} (\tau)}{f(z) - f(\tau)} \\
		&+ \frac{a_{-1} \gp p{2 - k}{n_0 - 1} (z) \fp pk{-n_0 + 2} (\tau) + \gp p{2 - k}{n_0 - 1} (z) \fp pk{-n_0 + 3} (\tau)}{f(z) - f(\tau)} \\
		=& \frac{\big( b_1 \fp pk{-n_0} (\tau) + b_{-1} \fp pk{-n_0 + 2} (\tau) + \fp pk{-n_0 + 3} (\tau) \big) \gp p{2 - k}{n_0 - 1} (z)}{g(z) - g(\tau)} \\
		&+ \frac{b_{-1} \fp pk{-n_0} (\tau) \gp p{2 - k}{n_0 + 1} (z) + \fp pk{-n_0} (\tau) \gp p{2 - k}{n_0 + 2} (z)}{g(z) - g(\tau)} .\\
	\end{align*}
\end{thm}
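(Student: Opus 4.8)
The plan is to prove the first displayed identity (the case $\cmss[M](p,k) = 0$) in detail, since the second follows by exactly the same method once one identifies which basis elements are present and which are skipped; the two expressions for $F_k^{(p)}$ are then equal because both equal $-G_{2-k}^{(p)}$ by Theorem~\ref{duality}, and Zagier duality lets one pass between the $\fp pk{}$-expansion and the $\gp p{2-k}{}$-expansion of the same bivariate series. So concretely I would begin by fixing $p \in \{11, 17, 19\}$ and $k$ even with $\cmss[M](p,k) = 0$, so that $\fp pk{m}$ exists for every $m \geq n_0$ with no gaps. The heart of the argument is a recurrence: multiplying $f(z) - f(\tau) = \fp p02(z) - \fp p02(\tau)$ against $F_k^{(p)}(z,\tau) = \sum_m \fp pk m(\tau) q^m$ and extracting the coefficient of each power of $q = e^{2\pi i z}$, I would show that $\bigl(f(z) - f(\tau)\bigr) F_k^{(p)}(z,\tau)$, viewed as a Laurent series in $q$ with coefficients in $M_k^\sharp(p)$ (functions of $\tau$), has only finitely many nonzero terms, and identify them explicitly. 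The key mechanism is that $f(z) \fp pk m(\tau) q^m$ contributes a $q^{m-2}$ term, while the recursion $\fp p02(\tau)\,\fp pk{m}(\tau)$ row-reduces against $\{\fp pk{m'}(\tau)\}_{m' \leq m+2}$ — precisely the relation flagged in the paragraph before Theorem~\ref{generatingfunctions2} — so that almost everything telescopes, leaving a bounded-degree numerator.

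The steps, in order: (1) Record the shape of $f(z) = q^{-2} + a_{-1}q^{-1} + a_0 + a_1 q + \cdots$ and the shape of $\fp pk{n_0}, \fp pk{n_0+1}, \dots$ from Corollaries~\ref{dim0}--\ref{dimnot02} (the leading terms $q^{-m}$ and the controlled list of exponents between $-m$ and the first "honest" exponent). (2) Use Zagier duality (Theorem~\ref{duality}) to rewrite $F_k^{(p)}(z,\tau)$ with its $q$-coefficients expanded in terms of the coefficients $b_{2-k}^{(p)}(n,m)$, i.e.\ recognize that $F_k^{(p)}(z,\tau) = -\sum_n \gp p{2-k}n(z)\, q_\tau^n$ where $q_\tau = e^{2\pi i \tau}$ — this is what makes the symmetric-looking numerators appear. (3) Compute the product $\bigl(f(z) - f(\tau)\bigr) F_k^{(p)}(z,\tau)$ and show term-by-term cancellation: the pole in $\tau$ coming from $f(\tau)$ is killed because $F_k^{(p)}$ already encodes every $\fp pk m(\tau)$; the pole in $z$ coming from $f(z) = q^{-2} + \cdots$ shifts the $q$-grading by $2$ and meshes with the shift $\fp pk m(\tau) \mapsto \fp pk{m \pm 1}(\tau)$, so all but finitely many $q$-powers vanish by the defining recurrence of the canonical basis. (4) Read off the finitely many surviving terms and check they assemble into the claimed numerator $\bigl(a_{-1}\gp p{2-k}{n_0+1}(z) + \gp p{2-k}{n_0+2}(z)\bigr)\fp pk{-n_0}(\tau) + \gp p{2-k}{n_0+1}(z)\fp pk{-n_0+1}(\tau)$; here $n_0 = -v_\infty(F_k^{(p)})$ and one uses that in genus $1$ the "width" of the initial irregular block is exactly $g_0 + 1 = 2$, which is why only indices $n_0, n_0+1$ (resp.\ $n_0+1, n_0+2$) enter. (5) Handle the $\cmss[M](p,k) \neq 0$ case identically, now with a single gap of the type tabulated in the proof of Theorem~\ref{duality2}, which widens the numerator to involve $\fp pk{-n_0+2}, \fp pk{-n_0+3}$ and $\gp p{2-k}{n_0-1}$; the bookkeeping is longer but structurally the same. (6) Finally, derive the $g(z) - g(\tau)$ form either by repeating (1)--(4) with the roles of $M_k^\sharp$ and $S_k^\sharp$ (hence $f$ and $g$) interchanged, or — cleaner — by invoking $F_k^{(p)} = -G_{2-k}^{(p)}$ and symmetry of the construction.

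The main obstacle is step~(3)--(4): making the telescoping rigorous. One must argue carefully that $\bigl(f(z) - f(\tau)\bigr)F_k^{(p)}(z,\tau)$, which a priori is a formal bivariate series, is in fact a genuine meromorphic modular object of bounded $q_z$- and $q_\tau$-degree, and that the "row-reduction" identities among the $\fp pk m$ (which hold in $M_k^\sharp(p)$) can be applied coefficient-wise inside the generating function without convergence issues. The honest way to do this is to bound, using the explicit Fourier shapes from Corollary~\ref{dimnot02}, the order of vanishing at $\infty$ in each variable of the difference between $\bigl(f(z)-f(\tau)\bigr)F_k^{(p)}(z,\tau)$ and the candidate numerator; if that difference has positive order in both variables while being forced (by weight and level considerations plus Lemma~\ref{zero}) to lie in a space whose elements cannot have such vanishing, it must be identically zero. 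Pinning down exactly which exponents $n_0$, $n_0 \pm 1$, $n_0 \pm 2$ appear — i.e.\ tracking the interaction between the gap structure of $\{\fp pk m\}_m$ and the $q^{-2}$-leading shape of $f$ — is where all the genuine content lies; once the indices are correct, the rest is the finite linear-algebra verification that the numerator reproduces the right leading coefficients, which I would state but not belabor.
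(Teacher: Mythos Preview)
Your proposal is correct and follows essentially the same strategy as the paper: derive a recurrence expressing $\fp pk{n+2}(\tau)$ in terms of $f(\tau)\fp pk n(\tau)$ and lower-index basis elements, substitute into the generating function, use duality $a_k^{(p)}(m,n) = -b_{2-k}^{(p)}(n,m)$ to recognize the correction terms as Fourier coefficients of the $\gp p{2-k}{}$'s, and telescope to a finite numerator; the paper carries out the harder case $\cmss[M](p,k)\neq 0$ explicitly and declares the no-gap case ``similar but simpler,'' while you do the reverse. One shortcut you miss in step~(6): the paper observes that $f(z)-g(z)$ is a constant (so $f(z)-f(\tau)=g(z)-g(\tau)$ and $a_n=b_n$ for $n\neq 0$), which makes the two displayed expressions literally equal after regrouping --- no need to repeat the argument or invoke $F_k^{(p)}=-G_{2-k}^{(p)}$.
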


\begin{proof}
	We prove the second of these two identities. The proof for the first expression is similar but simpler.
	
	Suppose $\cmss (p, k) \neq 0$. Then by inspection, we have either $k \equiv 0 \pmod{p - 1}$, or $p = 17$ and $k \equiv 6 \pmod{16}$, or $p = 19$ and $k \equiv 4, 8 \pmod{18}$. In any of these cases, $\cmss[M] (p, k) = 1$. We write $\misss[M](p, k) = \{ \eta \}$.
	
	For simplicity, we suppress dependence on $p$, denoting $\fp pki = f_{k,i}$ and $\gp pki = g_{k,i}$, as well as $F_k^{(p)} = F_k$. Write $k = k' + (p - 1) \ell$ with $0 \leq k' < p - 1$. By definition, $n_0 = v_\infty(F_k)$; by inspection we see $\eta = n_0 - 1$. Then $f_{k, -n_0}$ is defined for $n \geq -n_0$ when $n \neq -n_0 + 1$. For $n \geq -n_0$, $f_{k, n}$ may be written in the form $f_{k, n} = q^{-n} + a_k(n, n_0 - 1)q^{n_0 - 1} + a_k(n, n_0 + 1)q^{n_0 + 1} + \ldots$. Note that if $n = -n_0$, then $a_{k}(-n_0, n_0 - 1) = 0$.
	
	We see that \begin{align*}
		f(z) f_{k, -n_0}(z) =& (q^{-2} + a_{-1} q^{-1} + a_1 q + \ldots)(q^{n_0} + a_{k}(-n_0 , n_0 + 1) q^{n_0 + 1} + \ldots) \\
		=& q^{n_0 - 2} + C q^{n_0 - 1} + \big( a_{k}(-n_0 , n_0 + 2) + a_{-1} a_{k}(-n_0 , n_0 + 1) \big) q^{n_0}
	\end{align*} for some $C \in \CC$ (we do not need $C$ because there is no element $f_{k, -n_0 + 1}$). Thus, substituting $\tau$ for $z$, and writing this sum in terms of our canonical basis elements, we see that \begin{align*}
		f_{k, -n_0 + 2} (\tau) =& f(\tau) f_{k, -n_0}(\tau) - \big( a_{k}(-n_0 , n_0 + 2) + a_{-1} a_{k}(-n_0 , n_0 + 1) \big) f_{k, -n_0} (\tau) \\
		=& f(\tau) f_{k, -n_0}(\tau) + \big( b_{2 - k}(n_0 + 2, -n_0) + a_{-1} b_{2 - k}(n_0 + 1, -n_0) \big) f_{k, -n_0} (\tau).
	\end{align*}

	Similarly, for $n \geq - n_0$ ($n \neq -n_0 + 1$) we have \begin{align*}
		f_{k,n + 2}(\tau) =& f(\tau) f_{k, n}(\tau) - a_{n + n_0} f_{k, -n_0} (\tau) - \sum\limits_{i = - 1}^{n + n_0 - 2} a_i f_{k, n - i} (\tau) \\
		&+ b_{2 - k}(n_0 - 1, n) f_{k, -n_0 + 3} (\tau) + a_{-1} b_{2 - k}(n_0 - 1, n) f_{k, -n_0 + 2}(\tau) \\
		&+ \big( b_{2 -k}(n_0 + 2, n) + a_{-1} b_{2 - k}(n_0 + 1, n) + a_1 b_{2 - k}(n_0 - 1, n) \big) f_{k, -n_0} (\tau).
	\end{align*}

	Putting this recurrence relation into the generating function, we have \begin{align*}
		F_k(z,\tau) =& F_k = q^{-n_0} f_{k, -n_0}(\tau) + q^{-n_0 + 2} f_{k, -n_0 + 2}(\tau) + q^{-n_0 + 3} f_{k, -n_0 + 3}(\tau) + q^2 \sum\limits_{n= -n_0 + 4}^{\infty} f_{k,n}(\tau) q^{n - 2} \\
		=& q^{-n_0} f_{k, -n_0}(\tau) + q^{-n_0 + 3} f_{k, -n_0 + 3}(\tau) + q^{-n_0 + 2} f(\tau) f_{k, -n_0} (\tau) \\
		&+ q^{-n_0 + 2} \big( f_{k, -n_0 + 2}(\tau) - f(\tau) f_{k, -n_0} (\tau) \big) +  q^2 \sum\limits_{n= -n_0 + 2}^{\infty} f_{k,n + 2}(\tau) q^n \\
		=& q^{-n_0} f_{k, -n_0}(\tau) + q^{-n_0 + 3} f_{k, -n_0 + 3}(\tau) + q^{-n_0 + 2} f(\tau) f_{k, -n_0} (\tau) \\
		&+ q^{-n_0 + 2} f_{k, -n_0} (\tau) \big( b_{2 - k}(n_0 + 2, -n_0) + a_{-1} b_{2 - k}(n_0 + 1, -n_0) \big) \\
		&+ q^2 \sum\limits_{n= -n_0 + 2}^{\infty} \Big( f(\tau) f_{k, n}(\tau) - a_{n + n_0} f_{k, -n_0} (\tau) - \sum\limits_{i = - 1}^{n + n_0 - 2} a_i f_{k, n - i} (\tau) \\
		&+ b_{2 - k}(n_0 - 1, n) f_{k, -n_0 + 3} (\tau) + a_{-1} b_{2 - k}(n_0 - 1, n) f_{k, -n_0 + 2}(\tau) \\
		&+ \big( b_{2 -k}(n_0 + 2, n) + a_{-1} b_{2 - k}(n_0 + 1, n) + a_1 b_{2 - k}(n_0 - 1, n) \big) f_{k, -n_0} (\tau) \Big) q^n.
		\end{align*} Distributing $q^n$ over the infinite sum and simplifying, we obtain \begin{align*}
		F_k =& q^{-n_0} f_{k, -n_0}(\tau) + q^{-n_0 + 3} f_{k, -n_0 + 3}(\tau) + q^2 f(\tau) \big(q^{-n_0 + 2} f(\tau) f_{k, -n_0} (\tau) + \sum\limits_{n= -n_0 + 2}^{\infty} f_{k, n}(\tau) q^n \big) \\
		&- q^2 f_{k, -n_0} (\tau) \sum\limits_{n= -n_0 + 2}^{\infty} a_{n + n_0} q^n - q^2 \sum\limits_{n= -n_0 + 2}^{\infty} \sum\limits_{i = - 1}^{n + n_0 - 2} a_i f_{k, n - i} (\tau) q^n \\
		&+ q^2 f_{k, -n_0 + 3} (\tau) \sum\limits_{n= -n_0 + 2}^{\infty} b_{2 - k}(n_0 - 1, n) q^n + q^2 a_{-1} f_{k, -n_0 + 2}(\tau) \sum\limits_{n= -n_0 + 2}^{\infty} b_{2 - k}(n_0 - 1, n) q^n \\
		&+ q^2 f_{k, -n_0} (\tau) \big( q^{-n_0} b_{2 - k}(n_0 + 2, -n_0) + \sum\limits_{n= -n_0 + 2}^{\infty} b_{2 -k}(n_0 + 2, n) q^n \big) \\
		&+ q^2 a_{-1} f_{k, -n_0} (\tau) \big( q^{-n_0} b_{2 - k}(n_0 + 1, -n_0) + \sum\limits_{n= -n_0 + 2}^{\infty} b_{2 - k}(n_0 + 1, n) q^n \big) \\
		&+ q^2 a_1 f_{k, -n_0} (\tau) \sum\limits_{n= -n_0 + 2}^{\infty} b_{2 - k}(n_0 - 1, n) q^n.
	\end{align*}
	
	Consider the double sum. We have \begin{align*}
		\sum\limits_{n= -n_0 + 2}^{\infty} \sum\limits_{i = - 1}^{n + n_0 - 2} a_i f_{k, n - i} (\tau) q^n =& \sum\limits_{n= 0}^{\infty} \sum\limits_{i = - 1}^n a_i f_{k, -n_0 + 2 + n - i} (\tau) q^{-n_0 + 2 + n} \\
		=& q^{-1} a_{-1} \big( F_k - q^{-n_0 + 2} f_{k, -n_0 + 2} (\tau) - q^{-n_0} f_{k, -n_0} (\tau) \big) \\
		&+ \big( f(z) - q^{-1} a_{-1} - q^{-2} \big) \big( F_k - q^{-n_0} f_{k, -n_0} (\tau) \big) \\
		=& f(z) F_k - q^{-2} F_k - q^{-n_0 + 1} a_{-1} f_{k, -n_0 + 2} (\tau) \\
		&- q^{-n_0} f(z) f_{k, -n_0} (\tau) + q^{-n_0 - 2} f_{k, -n_0} (\tau).
	\end{align*}
	
	Plugging this back into the original expression, and rewriting the other sums by using the definitions of $F_k$ and of $g_{k, n} (z)$, we have \begin{align*}
		F_k =& q^{-n_0} f_{k, -n_0}(\tau) + q^{-n_0 + 3} f_{k, -n_0 + 3}(\tau) + q^2 f(\tau) F_k - q^{-n_0 + 2} f_{k, -n_0} (\tau) \big( f(z) - q a_1 - q^{-1} a_{-1} - q^{-2} \big) \\
		&- q^2 \big( f(z) F_k - q^{-2} F_k - q^{-n_0 + 1} a_{-1} f_{k, -n_0 + 2} (\tau) - q^{-n_0} f(z) f_{k, -n_0} (\tau) + q^{-n_0 - 2} f_{k, -n_0} (\tau) \big) \\
		&+ q^2 f_{k, -n_0 + 3} (\tau) \big( g_{2 - k, n_0 - 1} (z) - q^{-n_0 + 1} \big) + q^2 a_{-1} f_{k, -n_0 + 2}(\tau) \big( g_{2 - k, n_0 - 1} (z) - q^{-n_0 + 1} \big) \\
		&+ q^2 f_{k, -n_0} (\tau) \big( g_{2 - k, n_0 + 2} (z) - q^{-n_0 - 2} \big) + q^2 a_{-1} f_{k, -n_0} (\tau) \big( g_{2 - k, n_0 + 1} (z) - q^{-n_0 - 1} \big) \\
		&+ q^2 a_1 f_{k, -n_0} (\tau) \big( g_{2 - k, n_0 - 1} (z) - q^{-n_0 + 1} \big) \\
		=& F_k + q^2 \big( f(\tau) - f(z) \big) F_k + q^2 f_{k, -n_0 + 3} (\tau) g_{2 - k, n_0 - 1} (z) \\
		&+ q^2 a_{-1} f_{k, -n_0 + 2}(\tau) g_{2 - k, n_0 - 1} (z) + q^2 f_{k, -n_0} (\tau) g_{2 - k, n_0 + 2} (z) \\
		&+ q^2 a_{-1} f_{k, -n_0} (\tau) g_{2 - k, n_0 + 1} (z) + q^2 a_1 f_{k, -n_0} (\tau) g_{2 - k, n_0 - 1} (z).
	\end{align*} Now subtracting $F_k$ and $q^2 (f(\tau) - f(z)) F_k$ from both sides, and dividing by $q^2$, we obtain \begin{align*}
		\big( f(z) - f(\tau) \big) F_k =& \big( a_1 g_{2 - k, n_0 - 1} (z) + a_{-1} g_{2 - k, n_0 + 1} (z) + g_{2 - k, n_0 + 2} (z) \big) f_{k, -n_0} (\tau) \\
		&+ a_{-1} g_{2 - k, n_0 - 1} (z) f_{k, -n_0 + 2}(\tau) + g_{2 - k, n_0 - 1} (z) f_{k, -n_0 + 3} (\tau) \\
	\end{align*} and thus \begin{align*}
		F_k(z, \tau) =& \frac{\big( a_1 g_{2 - k, n_0 - 1} (z) + a_{-1} g_{2 - k, n_0 + 1} (z) + g_{2 - k, n_0 + 2} (z) \big) f_{k, -n_0} (\tau)}{f(z) - f(\tau)} \\
		&+ \frac{a_{-1} g_{2 - k, n_0 - 1} (z) f_{k, -n_0 + 2}(\tau) + g_{2 - k, n_0 - 1} (z) f_{k, -n_0 + 3} (\tau)}{f(z) - f(\tau)} \\
		=& \frac{\big( b_1 f_{k, -n_0} (\tau) + b_{-1} f_{k, -n_0 + 2}(\tau) + f_{k, -n_0 + 3} (\tau) \big) g_{2 - k, n_0 - 1} (z)}{g(z) - g(\tau)} \\
		&+ \frac{b_{-1} f_{k, -n_0} (\tau) g_{2 - k, n_0 + 1} (z) + f_{k, -n_0} (\tau) g_{2 - k, n_0 + 2} (z)}{g(z) - g(\tau)}
	\end{align*} as claimed. The second equality holds because $f(z) - g(z)$ is a constant.
\end{proof}

\bibliographystyle{amsplain}

\end{document}